\documentclass[12pt, reqno]{amsart}
\usepackage{amsmath, amsthm, amscd, amsfonts, amssymb}
\usepackage{bm}

\textheight 22.6truecm
\textwidth 16truecm
\setlength{\oddsidemargin}{0.2in}\setlength{\evensidemargin}{0.2in}

\setlength{\topmargin}{-.5cm}

\newtheorem{theorem}{Theorem}[section]
\newtheorem{lemma}[theorem]{Lemma}

\theoremstyle{definition}
\newtheorem{definition}[theorem]{Definition}
\newtheorem{example}[theorem]{Example}

\newtheorem{remark}[theorem]{Remark}
\numberwithin{equation}{section}

\newcommand{\vp}{\varphi}

\newcommand{\clb}{\mathcal{B}}

\newcommand{\cld}{\mathcal{D}}

\newcommand{\clg}{\mathcal{G}}
\newcommand{\clh}{\mathcal{H}}
\newcommand{\clk}{\mathcal{K}}

\newcommand{\clm}{\mathcal{M}}

\newcommand{\cls}{\mathcal{S}}

\newcommand{\D}{\mathbb{D}}

\newcommand{\T}{\mathbb{T}}

\newcommand{\raro}{\rightarrow}

\newcommand{\br}{B_{\sigma,e^{i\theta} }}

\newcommand{\C}{\mathbb{C}}
\newcommand{\he}{H^2_E(\T)}

\begin{document}

\setcounter{page}{1}

%\today

\title[Brownian shifts on vector-valued Hardy spaces]{Invariant subspaces of Brownian shifts on vector-valued Hardy spaces}

\author[Das]{Nilanjan Das}
\address{Indian Statistical Institute, Statistics and Mathematics Unit, 8th Mile, Mysore Road, Bangalore, 560059,
India}
\email{nilanjand7@gmail.com}

\author[Das]{Soma Das}
\address{Indian Statistical Institute, Statistics and Mathematics Unit, 8th Mile, Mysore Road, Bangalore, 560059, India}
\email{dsoma994@gmail.com}

\author[Sarkar]{Jaydeb Sarkar}
\address{Indian Statistical Institute, Statistics and Mathematics Unit, 8th Mile, Mysore Road, Bangalore, 560059,
India}
\email{jay@isibang.ac.in, jaydeb@gmail.com}

\subjclass[2020]{47A15, 46J15, 30H10, 30J05, 60J65}

\keywords{Hardy spaces, invariant subspaces, reducing subspaces, unitary equivalence, Brownian shifts}

\begin{abstract}
We characterize invariant subspaces of Brownian shifts on vector-valued Hardy spaces. We also solve the unitary equivalence problem for the invariant subspaces of these shifts.
\end{abstract}

\maketitle

\tableofcontents

\section{Introduction}\label{sec intro}

The computation of the lattices of invariant subspaces of bounded linear operators acting on Hilbert spaces is a consistently fascinating problem. There are few operators for which a complete description of the lattice of invariant subspaces is known \cite{Eva, Hal1, Hal2, Radj-Rosen}. Among the known cases, one of the most notable is the shift operator $S$ on $H^2(\T)$ (where $\T = \partial \D$ and $\D = \{z \in \mathbb{C}: |z| < 1\}$). Here, $H^2(\T)$ denotes the classical Hardy space of analytic functions on the open unit disc $\D$, and
\[
S f = z f,
\]
for all $f \in H^2(\T)$. Recall that $H^2(\T)$ can also be viewed as a closed subspace of $L^2(\T)$ consisting of functions whose Fourier coefficients vanish at all negative indices.

The shift $S$ is a typical example of a pure isometry; that is, an isometry that does not have a unitary part. The invariant subspaces of $S$ are precisely of the form
\[
\varphi H^2(\mathbb{T}),
\]
where $\vp$ is an inner function. This is the classical result of Beurling \cite{Beurling}. Continuing with $S$ on $H^2(\T)$, we now turn to Brownian shifts. These were introduced by Agler and Stankus on $H^2(\T) \oplus \mathbb{C}$ \cite[Definition 5.5]{Agler-Stankus} in the context of $m$-isometries, and, along the lines of Beurling, they characterized the invariant subspaces of such operators. The \textit{Brownian shift} of covariance $\sigma > 0$ and angle $\theta \in [0, 2\pi)$ is the bounded linear operator $\br: H^2(\T)\oplus \mathbb{C}\to H^2(\T) \oplus\mathbb{C}$, defined by
\[
\br = \begin{bmatrix}
S & \sigma (1\otimes 1)\\
0 & e^{i\theta}
\end{bmatrix},
\]
where $((1\otimes 1)\alpha)(z) = \alpha$ for all $\alpha \in \mathbb{C}$ and $z \in \T$. Brownian shifts are connected to the time-shift operators associated with Brownian motion processes \cite{Agler-Stankus}. Also, for a curious statistical take on Brownian shifts, see \cite[page 13]{Jim}.

On the other hand, we recall that shifts on vector-valued Hardy spaces are all examples of pure isometries. Given a Hilbert space $E$, denote by $H^2_E(\T)$ the $E$-valued Hardy space over $\T$. By $S_E$, we refer to the shift operator on $H^2_E(\T)$. When $E = \mathbb{C}$, we simply write $H^2_\mathbb{C}(\T)$ as $H^2(\T)$, and $S_{\mathbb{C}}$ as $S$. In this case, the invariant subspaces of $S_E$ are parameterized by operator-valued inner functions—a classical result known as the Beurling–Lax–Halmos theorem (cf. \cite[Theorem 2.1, p. 239]{Foias-Frazho}). This result lies within the framework of the invariant subspace problem—where, instead of investigating the existence of invariant subspaces for general operators, one fixes a natural operator and seeks a complete description of the lattice of its invariant subspaces.

In this paper, we introduce Brownian shifts on vector-valued Hardy spaces and compute the lattice of invariant subspaces for these operators. This work is inspired both by the classification of invariant subspaces by Agler and Stankus and by the Beurling–Lax–Halmos theorem for shift-invariant subspaces on vector-valued Hardy spaces. In particular, our result yields the lattices of invariant subspaces for the Brownian shifts $\br$ studied by Agler and Stankus and provides a new proof of their structure.

The \textit{Brownian shift on $H^2_E(\T) \oplus E$} of covariance $\sigma > 0$ and angle $\theta \in [0, 2 \pi)$ is the bounded linear operator
\[
B^E_{\sigma, e^{i\theta}} = \begin{bmatrix}
S_E & \sigma i_E\\
0 & e^{i\theta}I_E
\end{bmatrix}: H^2_E(\T) \oplus E \raro H^2_E(\T) \oplus E,
\]
where $i_E : E \raro H_E^2(\T)$ is the inclusion map defined by $(i_E x)(z) = x$ for all $x \in E$ and $z \in \T$.

This is a particular class of Brownian unitaries with positive covariance, as introduced by Agler and Stankus \cite{Agler-Stankus}. Clearly, in the scalar case $E = \mathbb{C}$, we have $B^{\mathbb{C}}_{\sigma, e^{i\theta}} = B_{\sigma, e^{i\theta}}$. We often refer to $B_{\sigma, e^{i\theta}}$ as a Brownian shift on $H^2(\T)$. Throughout the paper, $E$ will denote a fixed, but otherwise arbitrary, separable Hilbert space over $\mathbb{C}$, with the possibility that $E = \mathbb{C}$. One additional motivation for introducing Brownian shifts on vector-valued Hardy spaces is that they are unitarily equivalent to the Brownian shifts on $H^2(\T)$ tensored with identity operators. That is,
\[
B_{\sigma, e^{i\theta}}^E \text{ on } H^2_E(\T) \oplus E \cong B_{\sigma, e^{i\theta}} \otimes I_E  \text{ on } (H^2(\T) \oplus\mathbb{C}) \otimes E,
\]
where \lq\lq\,$\cong$\,\rq\rq denotes unitary equivalence between operators. This also aligns with the unitary equivalence
\[
S_E \text{ on } H^2_E(\T) \cong S \otimes I_E  \text{ on } H^2(\T) \otimes E.
\]

The invariant subspaces of $S_E$, as described in the classical Beurling–Lax–Halmos theorem, now also suggest a similar investigation of the invariant subspaces of the Brownian shifts $B^E_{\sigma, e^{i\theta}}$. In this paper, we do precisely that. To this end, we partition invariant subspaces of Brownian shifts into two distinct types:

\begin{definition}\label{def:inv sub}
Let $\clm$ be a closed subspace of $H^2_E(\T) \oplus E$ that is invariant under $B_{\sigma, e^{i\theta}}^E$. We say that:
\begin{enumerate}
\item $\clm$ is Type I if $\clm \subseteq \he \oplus \{0\}$.
\item $\clm$ is Type II if $\clm \nsubseteq \he \oplus \{0\}$.
\end{enumerate}
\end{definition}

To proceed, we first recall the basic and commonly used terminology: Given a Hilbert space $E_*$, let $H^\infty_{\clb(E_*, E)}(\T)$ denote the Banach space of $\clb(E_*, E)$-valued bounded analytic functions on $\D$, where $\clb(E_*, E)$ is the space of bounded linear operators from $E_*$ to $E$ (we write $H^\infty_{\clb(E_*, E)}(\T)$ as $H^\infty(\T)$ whenever $E_* = E = \mathbb{C}$). Each $\Phi \in H^\infty_{\clb(E_*, E)}(\T)$ induces a multiplication operator $M_\Phi \in \clb(H^2_{E_*}(\T), H^2_{E}(\T))$, defined by
\[
M_\Phi f = \Phi f,
\]
for all $f \in H^2_{E_*}(\T)$. It is important to note that
\[
M_\Phi S_{E_*} = S_{E} M_\Phi.
\]
A function $\Phi \in H^\infty_{\clb(E_*, E)}(\T)$ is called \textit{inner} if $M_\Phi$ is an isometry. This is equivalent to the condition that $\Phi(z)$ is an isometry from $E_*$ to $E$ for almost every $z \in \T$. Recall that for an inner function $\Phi \in H_{\clb(E_*, E)}^\infty(\T)$, the \emph{model space} $\clk_\Phi$ (cf. \cite{NaFo70}) is defined by
\[
\clk_\Phi: = H_{E}^2(\T) \ominus \Phi H_{E_*}^2(\T).
\]
Fix a Brownian shift $B^E_{\sigma, e^{i\theta}}$. Given an inner function $\Phi \in H_{\clb(E_*, E)}^\infty(\T)$, we set
\begin{equation}\label{eqn: G Phi}
\clg_\Phi=\left\{\begin{bmatrix} g \\ y \end{bmatrix} \in H^2_E(\T) \oplus E: y \in E, g = \frac{\Phi x-\sigma y}{z-e^{i\theta}} \in H^2_E(\T) \text{ for some } x \in E_* \right\}.
\end{equation}
This set exhibits some curious features of general interest. For instance, $\clg_\Phi$ is a closed subspace of $H^2_E(\T) \oplus E$ (see Lemma \ref{lemma: G Phi}).

In Theorems \ref{th1-br} and \ref{thm: type 2 complete}, we establish the following invariant subspace theorem for Brownian shifts: Let $\clm$ be a nonzero closed subspace of $\he \oplus E$. Then, the following are true:

\begin{enumerate}
\item $\clm$ is a Type I invariant subspace of $B_{\sigma, e^{i\theta}}^E$ if and only if
\[
\clm = \Phi H^2_{E_1}(\T) \oplus \{0\},
\]
for some inner function $\Phi \in H^\infty_{\clb(E_1,E)}(\T)$ and nonzero Hilbert space $E_1$.

\item $\clm$ is a Type II invariant subspace of $B_{\sigma, e^{i\theta}}^E$ if and only if there exists an inner function $\Phi \in H_{\clb(E_2, E)}^\infty(\T)$ for some nonzero Hilbert space $E_2$, and a nonzero subset $\clg \subseteq \clg_\Phi$ such that
\[
\clm =  \langle \clg \rangle \oplus \left(\Phi H^2_{E_2}(\T) \oplus \{0\}\right).
\]
\end{enumerate}

Given a set $\mathcal{N}$ in a Hilbert space, we denote by $\langle \mathcal{N} \rangle$ the closed linear span of the elements of $\mathcal{N}$.

\begin{definition}\label{def: canonical}
The representations of $\clm$ in (1) and (2) above are referred to as the \textit{canonical representations} of Type I and Type II invariant subspaces of $\br^E$, respectively.
\end{definition}

The inner function $\Phi$ in the canonical representation given in part (2) also exhibits a certain boundary behavior, similar to the scalar case studied by Agler and Stankus. We refer the reader to Remark \ref{rem: boundary} for more details. See also Section \ref{sec: scalar} for a detailed analysis of how the full-length invariant subspace theorem of Agler and Stankus can be recovered using the above result and Remark \ref{rem: boundary}.

Following \cite{DDS} (also see \cite{Ron, DS}), we turn the above invariant subspace result to the problem of unitary equivalence. Specifically, given a pair of closed subspaces $\clm_1$ and $\clm_2$ of $H^2_E(\T) \oplus E$ that are invariant under the Brownian shifts $B_{\sigma_1, e^{i\theta_1}}^E$ and $B_{\sigma_2, e^{i\theta_2}}^E$, respectively, we consider the restriction operators $B^E_{\sigma_1, e^{i\theta_1}}|_{\clm_1}$ on $\clm_1$ and $B^E_{\sigma_2, e^{i\theta_2}}|_{\clm_2}$ on $\clm_2$, and determine when they are unitarily equivalent. In Theorem \ref{thm: unit equiv}, we prove: There exists a unitary $U: \clm_1 \raro \clm_2$ such that
\[
U B^E_{\sigma_1, e^{i\theta_1}}{\big|_{\clm_1}} = B^E_{\sigma_2, e^{i\theta_2}}{\big|_{\clm_2}} U,
\]
if and only if any one of the following conditions is true:
\begin{itemize}
\item[(1)]
Both $\clm_1$ and $\clm_2$ are Type I, and if $\clm_i= \Phi_i H_{E_i}^2(\T)\oplus\{0\}$ for $i=1, 2$, then $\dim E_1 = \dim E_2$.
\item[(2)]
Both $\clm_1$ and $\clm_2$ are Type II. Furthermore, $\theta_1=\theta_2$, and if $\clm_j= \langle \clg_j \rangle \oplus (\Phi_j H_{E_j}^2(\T)\oplus\{0\})$ is the canonical representation of $\clm_j$, $j=1,2$, then there exist a pair of unitaries $U_{\clg}: \langle\clg_1\rangle\raro \langle\clg_2\rangle$ and $U_E: E_1\raro E_2$, such that
\[
U_Ex_1^\prime=x_2^\prime,
\]
whenever
\[
U_{\clg}\begin{bmatrix}
    g_1 \\ y_1
\end{bmatrix}
=\begin{bmatrix}
    g_2\\ y_2
\end{bmatrix},
\]
where $\begin{bmatrix} g_j\\ y_j \end{bmatrix}\in \langle\clg_j\rangle$ with
\[
g_j=\frac{\Phi_jx_j^\prime-\sigma_j y_j}{z-e^{i\theta_j}},
\]
for some unique $x_j^\prime\in E_j$, $j=1, 2$.
\end{itemize}

We refer to Theorem \ref{complex-br} for the scalar version of the above result (also see \cite[Theorem 2.1]{DDS}).

A fundamental problem in Hilbert function space theory is the construction of new operators from existing ones. In particular, a natural question is whether the class of operators obtained by restricting natural operators to their invariant subspaces yields new operators  (see the introductory part of Section \ref{sec: examples} for more details).

In this context, Brownian shifts on $H^2_E(\T) \oplus E$ exhibit mixed behavior, suggesting the potential for even richer structural properties. More specifically, in the final section, we apply our main results to show that there exists an invariant subspace of a Brownian shift such that the corresponding restriction operator is unitarily equivalent to another Brownian shift. At the same time, we also provide examples of invariant subspaces for which the restriction operator is not unitarily equivalent to any Brownian shift. These examples indicate that Brownian shifts exhibit behavior intermediate between that of shifts on the Hardy space—where restrictions to invariant subspaces remain unitarily equivalent shifts—and those on the Bergman and Dirichlet spaces, where such restrictions typically produce unitarily inequivalent operators.

We finally remark that Brownian shifts are integral components of $2$-isometries, which play an important role in understanding the general structure of linear operators (cf. \cite{Badea, Chavan et al, Jan}). In the context of function theory, linear operators, and the prediction problem for $2$-stationary processes, we refer the reader to the concluding part of the paper by Agler and Stankus \cite{AS 2}. For the prediction of stationary processes, we recommend the excellent monograph \cite{BD}, particularly Chapter 2. See also \cite{Sto1, Sto2} for related work and possible directions for future connections.

The remainder of the paper is organized as follows. In Section \ref{sec: Inv sub}, we describe the invariant subspaces of Brownian shifts. Section \ref{sec: Unit equiv} determines when such pairs of invariant subspaces are unitarily equivalent. Section \ref{sec: scalar} presents the results obtained this far in the context of the scalar-valued Hardy space. In particular, it highlights the unitary equivalence result for Brownian shifts on $H^2(\T)$. Section \ref{sec: reducing} presents the structure of reducing subspaces for $\br^E$. The final section, Section \ref{sec: examples}, consists of concrete examples illustrating main results of this paper.

\section{Invariant subspaces}\label{sec: Inv sub}

This section presents a complete description of the invariant subspaces of Brownian shifts $\br^E$ on $H^2_E(\T)\oplus E$. We begin with Type I invariant subspaces. Note that $\clm\subseteq \he \oplus \{0\}$ is a nonzero closed subspace if and only if $\clm = \mathcal{M}_0\oplus \{0\}$ for some nonzero closed subspace $\mathcal{M}_0$ of $\he$.

\begin{theorem}\label{th1-br}
Let $\clm$ be a nonzero closed subspace of $\he \oplus E$. Assume that $\clm \subseteq \he \oplus \{0\}$. Then $B^E_{\sigma, e^{i\theta}}(\clm) \subseteq \clm$ if and only if
\[
\clm = \Phi H^2_{E_1}(\T) \oplus \{0\},
\]
for some inner function $\Phi \in H^\infty_{\clb(E_1,E)}(\T)$ and nonzero Hilbert space $E_1$.
\end{theorem}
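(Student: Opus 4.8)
The plan is to reduce the invariance condition to the classical Beurling--Lax--Halmos setting, exploiting the fact that on a Type I subspace the Brownian shift ignores its covariance and angle data. First I would use the remark preceding the theorem to write $\clm = \clm_0 \oplus \{0\}$ for a unique nonzero closed subspace $\clm_0 \subseteq \he$, so that every element of $\clm$ has the form $\begin{bmatrix} f \\ 0 \end{bmatrix}$ with $f \in \clm_0$. Then I would compute the action of the shift on such vectors: from the block form of $B^E_{\sigma, e^{i\theta}}$ one gets
\[
B^E_{\sigma, e^{i\theta}} \begin{bmatrix} f \\ 0 \end{bmatrix} = \begin{bmatrix} S_E f + \sigma i_E 0 \\ e^{i\theta} 0 \end{bmatrix} = \begin{bmatrix} S_E f \\ 0 \end{bmatrix}.
\]
The crucial observation is that because the $E$-coordinate of every vector in $\clm$ vanishes, the off-diagonal term $\sigma i_E$ and the scalar $e^{i\theta}$ never contribute; the operator behaves exactly like $S_E$ in the first coordinate.

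It follows immediately that $B^E_{\sigma, e^{i\theta}}(\clm) \subseteq \clm$ holds if and only if $S_E f \in \clm_0$ for every $f \in \clm_0$, i.e.\ if and only if $\clm_0$ is an $S_E$-invariant subspace of $\he$. At this stage I would invoke the Beurling--Lax--Halmos theorem (\cite[Theorem 2.1, p. 239]{Foias-Frazho}): a nonzero closed $S_E$-invariant subspace $\clm_0 \subseteq \he$ is exactly one of the form $\clm_0 = \Phi H^2_{E_1}(\T)$ for some nonzero Hilbert space $E_1$ and some inner function $\Phi \in H^\infty_{\clb(E_1,E)}(\T)$. Translating back through $\clm = \clm_0 \oplus \{0\}$ gives the asserted representation $\clm = \Phi H^2_{E_1}(\T) \oplus \{0\}$.

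For the converse I would simply verify directly that any $\clm$ of this form is Type I and invariant: it clearly sits inside $\he \oplus \{0\}$, and by the computation above $B^E_{\sigma, e^{i\theta}}$ restricted to it acts as $S_E$ on $\Phi H^2_{E_1}(\T) = M_\Phi H^2_{E_1}(\T)$, which is $S_E$-invariant since $S_E M_\Phi = M_\Phi S_{E_1}$.

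I do not expect a genuine obstacle here, since the entire analytic content is carried by the cited Beurling--Lax--Halmos theorem; the proof is really a structural reduction. The only points needing a little care are checking that the decomposition $\clm = \clm_0 \oplus \{0\}$ is legitimate and that the nonvanishing of $\clm$ forces $E_1 \neq \{0\}$, and making sure both implications are recorded separately — the forward direction through the reduction to $S_E$-invariance, and the reverse direction through the direct invariance check.
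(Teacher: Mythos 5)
Your proposal is correct and follows essentially the same route as the paper: decompose $\clm = \clm_0 \oplus \{0\}$, observe that the off-diagonal entries of $B^E_{\sigma, e^{i\theta}}$ never act on vectors with vanishing $E$-coordinate so that invariance reduces to $S_E$-invariance of $\clm_0$, and then invoke the Beurling--Lax--Halmos theorem. The converse via the block-triangular form is also exactly the paper's argument.
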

\begin{proof}
We know that $\clm = \mathcal{M}_0\oplus \{0\}$ for some nonzero closed subspace $\mathcal{M}_0 \subseteq \he$. Since
\[
B_{\sigma, e^{i\theta}}^E\big|_{\he \oplus \{0\}}=\begin{bmatrix}
S_E & 0\\
0 & 0
\end{bmatrix},
\]
the fact that $\clm$ is invariant under $B_{\sigma, e^{i\theta}}^E\big|_{\he\oplus \{0\}}$ is equivalent to $\clm_0$ being invariant under $S_E$ on $\he$. The classical Beurling-Lax-Halmos theorem (cf. \cite[Theorem 2.1, p. 239]{Foias-Frazho}) guarantees that this is same as saying
\[
\clm_0=\Phi H^2_{E_1}(\T),
\]
for some inner function $\Phi\in H_{\clb(E_1,E)}^\infty(\T)$ and nonzero Hilbert space $E_1$. The converse follows from the upper triangular representation of the Brownian shifts.
\end{proof}

We now proceed to the other type of invariant subspaces. For a fixed Brownian shift $B^E_{\sigma, e^{i\theta}}$ and an inner function $\Phi \in H_{\clb(E_2, E)}^\infty(\T)$, recall the construction of $\clg_\Phi$ from \eqref{eqn: G Phi}:
\[
\clg_\Phi=\left\{\begin{bmatrix} g \\ y \end{bmatrix} \in H^2_E(\T) \oplus E: y \in E, g = \frac{\Phi x-\sigma y}{z-e^{i\theta}} \in H^2_E(\T) \text{ for some } x \in E_2 \right\}.
\]
In the following, we prove that the set $\clg_\Phi$ is special:

\begin{lemma}\label{lemma: G Phi}
Let $\Phi \in H_{\clb(E_2,E)}^\infty(\T)$ be an inner function. If $\begin{bmatrix}
g\\
y
\end{bmatrix} \in \clg_\Phi$ is a nonzero element, then $g \in \clk_\Phi$ and $y \neq 0$. Moreover, $\clg_\Phi$ is a closed subspace of $H^2_E(\T) \oplus E$.
\end{lemma}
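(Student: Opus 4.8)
The plan is to translate membership in $\clg_\Phi$ into the single identity
\[
(z - e^{i\theta}) g = \Phi x - \sigma y, \qquad \text{equivalently} \qquad S_E g - e^{i\theta} g = \Phi x - \sigma y,
\]
and to extract both the location of $g$ and the nonvanishing of $y$ from it. To prove $g \in \clk_\Phi$ I would show the equivalent statement $M_\Phi^* g = 0$ (recall $\clk_\Phi = \ker M_\Phi^*$). Writing $M_\Phi^* g = \sum_{n \ge 0} c_n z^n$ with $c_n \in E_2$ and fixing $e \in E_2$, set $u_n := \langle M_\Phi^* g, z^n e \rangle = \langle g, z^n \Phi e \rangle = \langle c_n, e\rangle$. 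Pairing the displayed identity against $z^n \Phi e = M_\Phi(z^n e)$ and using that $\Phi$ is inner, so $M_\Phi^* M_\Phi = I$ and $\Phi(z)^*\Phi(z) = I_{E_2}$ a.e., I would check that the right-hand side $\langle \Phi x - \sigma y, z^n \Phi e\rangle$ vanishes for every $n \ge 1$, while the left-hand side equals $u_{n-1} - e^{i\theta} u_n$ because $S_E^*(z^n \Phi e) = z^{n-1}\Phi e$ for $n \ge 1$. This produces the recursion $u_n = e^{-i\theta} u_{n-1}$.

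This recursion is the crux, and it is precisely where the boundary position $e^{i\theta} \in \T$ enters. Since $|e^{i\theta}| = 1$, it forces $|u_n| = |u_0|$ for all $n$; but $\sum_n |u_n|^2 \le \|e\|^2 \sum_n \|c_n\|^2 = \|e\|^2 \|M_\Phi^* g\|^2 < \infty$, so necessarily $u_0 = 0$, whence $u_n = 0$ for all $n$. As $e \in E_2$ was arbitrary, every $c_n$ vanishes, giving $M_\Phi^* g = 0$, i.e. $g \in \clk_\Phi$. I expect this collapse—an $\ell^2$ sequence of constant modulus must be identically zero—to be the main (and only genuinely delicate) obstacle; had $e^{i\theta}$ lain inside $\D$ the argument would break down.

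With $g \in \clk_\Phi$ secured, the nonvanishing of $y$ for a nonzero element follows from a Pythagorean argument. If $y = 0$, the identity reads $S_E g = e^{i\theta} g + \Phi x$, where $e^{i\theta} g \in \clk_\Phi$ and $\Phi x \in \Phi H^2_{E_2} = \clk_\Phi^\perp$ are orthogonal; hence $\|g\|^2 = \|S_E g\|^2 = \|g\|^2 + \|\Phi x\|^2$, forcing $\Phi x = 0$ and then $(z - e^{i\theta}) g = 0$, so $g = 0$. Thus $y = 0$ makes the whole vector zero, which is the contrapositive of the assertion $y \ne 0$.

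For the final claim that $\clg_\Phi$ is a closed subspace, linearity is immediate from the explicit formula: the sum of two elements with parameters $(x_1, y_1)$ and $(x_2, y_2)$ is again of the required form with parameters $(x_1 + x_2, y_1 + y_2)$, and likewise for scalar multiples. For closedness I would take $\begin{bmatrix} g_n \\ y_n \end{bmatrix} \in \clg_\Phi$ converging to $\begin{bmatrix} g \\ y \end{bmatrix}$ and put $\Phi x_n = (z - e^{i\theta}) g_n + \sigma y_n$. Since multiplication by $z - e^{i\theta}$ is bounded on $\he$ and $y_n \to y$, the left-hand side converges in $\he$; because $M_\Phi$ is isometric and the $x_n$ are constants, $\|x_n - x_m\|_{E_2} = \|\Phi x_n - \Phi x_m\|$, so $(x_n)$ is Cauchy and converges to some $x \in E_2$. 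Passing to the limit gives $(z - e^{i\theta}) g = \Phi x - \sigma y$, that is $\begin{bmatrix} g \\ y \end{bmatrix} \in \clg_\Phi$, completing the argument.
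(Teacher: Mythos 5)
Your proposal is correct and follows essentially the same route as the paper's proof: the same recursion $u_n = e^{-i\theta}u_{n-1}$ obtained by pairing $(z-e^{i\theta})g = \Phi x - \sigma y$ against $\Phi(z^n e)$ (the paper runs it over an orthonormal basis of $E_2$, you over an arbitrary $e \in E_2$, and both conclude that a constant-modulus $\ell^2$ sequence must vanish), the same Pythagorean argument for $y \neq 0$, and the same Cauchy-sequence argument via the isometry $M_\Phi$ for closedness. No substantive differences.
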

\begin{proof}
Suppose $g=\frac{\Phi x-\sigma y}{z-e^{i\theta}}\in \he$ for some $x\in E_2$ and $y\in E$. We have $zg + \sigma y = e^{i\theta}g + \Phi x$. Let $\{f_j:j\geq 1\}$ be an orthonormal basis for $E_2$. Then for any $n\geq 1$ and $j\geq 1$, we have (note that $\Phi(0)^* \in \clb(E, E_2)$)
\[
\langle y,\Phi (z^nf_j) \rangle = \langle \Phi(0)^* y, z^nf_j \rangle = 0,
\]
and similarly, we also have (note that $x \in E_2$) $\langle \Phi x,\Phi (z^nf_j)\rangle = 0$. Then
\[
\langle zg + \sigma y,\Phi (z^nf_j) \rangle = \langle e^{i\theta}g + \Phi x,\Phi (z^nf_j) \rangle,
\]
implies
\[
\langle zg,\Phi (z^nf_j) \rangle + \sigma\langle y,\Phi (z^nf_j) \rangle  = e^{i\theta}\langle g,\Phi (z^nf_j) \rangle +\langle \Phi x,\Phi (z^nf_j)\rangle,
\]
which gives
\begin{equation}\label{cond-1}
\langle M^*_\Phi g, z^{n-1}f_j \rangle = e^{i\theta}\langle M^*_\Phi g,z^nf_j \rangle.
\end{equation}
Let
\[
M^*_\Phi g = \sum_{j=0}^{\infty} x_j z^j,
\]
where $x_j \in E_2$ for all $j \geq 0$. If any one of these $x_j$'s is nonzero, then by \eqref{cond-1} we get
\[
\|M^*_\Phi g\| = \infty,
\]
which is not possible. This shows that $M^*_\Phi g = 0$, thereby proving the claim that $g \in \clk_\Phi$. Next, assume that $y = 0$. Then $zg = \Phi x + e^{i\theta} g$, which implies
\[
\|zg\|^2 = \|\Phi x\|^2 + \|g\|^2,
\]
as $g \in \clk_\Phi$. As $\|zg\| = \|g\|$ and $\|\Phi x\| = \|x\|$, we conclude that $x = 0$, and consequently $g = 0$.

\noindent
Now we turn to prove that $\clg_\Phi$ is a closed subspace. It is easy to see that $\clg_\Phi$ is indeed a subspace. To show that it is closed, we pick a sequence $\left\{\begin{bmatrix}
g_n\\
y_n
\end{bmatrix}\right\}\in \clg_\Phi$ such that
\[
\begin{bmatrix}
g_n\\
y_n
\end{bmatrix}\raro\begin{bmatrix}
g\\
y
\end{bmatrix}\in H^2_E(\T)\oplus E.
\]
Equivalently, $g_n\raro g$ in $H^2_E(\T)$ and $y_n\raro y$ in $E$. Now for each $g_n$, there is $x_n\in E_2$ such that
$$
zg_n+\sigma y_n=\Phi x_n+e^{i\theta} g_n,
$$
which means $\{\Phi x_n\}$ is convergent, and this implies $\{x_n\}$ is a Cauchy sequence in $E_2$. Thus $x_n\raro x\in E_2$, and hence $\Phi x_n\raro \Phi x$.
From the above identity, by passing over to the limit we see that
$$
zg+\sigma y=\Phi x+e^{i\theta} g,
$$
that is, $g=\frac{\Phi x-\sigma y}{z-e^{i\theta}}$. This completes the proof of the lemma.
\end{proof}

Given a closed subspace $\clm \subseteq \he \oplus E$, the \textit{defect space} of $\clm$ is defined by
\[
\cld_\clm = \clm \ominus (\clm \cap (\he \oplus \{0\})).
\]

\begin{theorem}\label{th2-br}
Let $\clm$ be a nonzero closed subspace of $\he \oplus E$. Assume that $\clm \nsubseteq \he \oplus \{0\}$. If $B^E_{\sigma, e^{i\theta}}(\clm) \subseteq \clm$, then there exists an inner function $\Phi \in H_{\clb(E_2, E)}^\infty(\T)$ for some nonzero Hilbert space $E_2$, and a set $\clg \subseteq \clg_\Phi$ such that
\[
\clm = \cld_\clm \oplus \left(\Phi H^2_{E_2}(\T) \oplus \{0\}\right),
\]
and
\[
\cld_\clm = \langle \clg \rangle.
\]
Moreover, if $\begin{bmatrix} g \\ y \end{bmatrix} \in \cld_\clm$, then there exists unique $x \in E_2$ with $\|x\| = \sigma \|y\|$ such that
\[
g = \frac{\Phi x-\sigma y}{z-e^{i\theta}}.
\]
\end{theorem}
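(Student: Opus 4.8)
The plan is to reduce the Type II case to the classical Beurling--Lax--Halmos theorem by isolating the part of $\clm$ living purely in the first coordinate. Set $\clm_0 := \clm \cap (\he \oplus \{0\})$. Since $B^E_{\sigma, e^{i\theta}}$ restricts on $\he \oplus \{0\}$ to $\begin{bmatrix} S_E & 0 \\ 0 & 0 \end{bmatrix}$, invariance of $\clm$ forces $\clm_0$ to be invariant under $S_E$ in the first coordinate, so that $\clm_0 = \Phi H^2_{E_2}(\T) \oplus \{0\}$ for some inner $\Phi \in H^\infty_{\clb(E_2, E)}(\T)$ and nonzero $E_2$ --- \emph{provided} $\clm_0 \neq \{0\}$. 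To see $\clm_0$ is nonzero I would pick $\begin{bmatrix} g \\ y \end{bmatrix} \in \clm$ with $y \neq 0$ (available since $\clm$ is Type II) and observe that $(B^E_{\sigma, e^{i\theta}} - e^{i\theta} I)\begin{bmatrix} g \\ y \end{bmatrix} = \begin{bmatrix} (z - e^{i\theta})g + \sigma y \\ 0 \end{bmatrix}$ lies in $\clm_0$; were it zero we would get $g = -\sigma y/(z - e^{i\theta})$, impossible because $1/(z - e^{i\theta}) \notin H^2(\T)$. The orthogonal decomposition $\clm = \cld_\clm \oplus (\Phi H^2_{E_2}(\T) \oplus \{0\})$ is then immediate from the definition of $\cld_\clm$.

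For the ``moreover'' claim --- which simultaneously yields $\cld_\clm \subseteq \clg_\Phi$ and so lets me take $\clg = \cld_\clm$ (a closed subspace, whence $\langle \clg \rangle = \cld_\clm$) --- fix $\begin{bmatrix} g \\ y \end{bmatrix} \in \cld_\clm$. Orthogonality to $\Phi H^2_{E_2}(\T) \oplus \{0\}$ gives $g \in \clk_\Phi$, i.e. $M_\Phi^* g = 0$. Repeating the displacement computation above shows $(z - e^{i\theta})g + \sigma y = \Phi h$ for some $h \in H^2_{E_2}(\T)$. The heart of the argument is to upgrade this to $h \in E_2$, i.e. to show $h$ is a \emph{constant}. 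I would expand $h_n := \langle h, z^n f_j \rangle = \langle \Phi h, \Phi(z^n f_j) \rangle$ for an orthonormal basis $\{f_j\}$ of $E_2$ and $n \geq 1$, substituting $\Phi h = zg - e^{i\theta} g + \sigma y$: the term $\langle zg, z^n \Phi f_j \rangle$ collapses to $\langle M_\Phi^* g, z^{n-1} f_j \rangle = 0$, the term $e^{i\theta}\langle g, z^n \Phi f_j \rangle = e^{i\theta}\langle M_\Phi^* g, z^n f_j \rangle$ vanishes, and $\langle y, z^n \Phi f_j \rangle = 0$ since $y$ is a constant function while $z^n \Phi f_j$ carries no Fourier mass below degree $n \geq 1$. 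Hence $h_n = 0$ for all $n \geq 1$, so $h = x \in E_2$ and $g = (\Phi x - \sigma y)/(z - e^{i\theta})$, placing $\begin{bmatrix} g \\ y \end{bmatrix}$ in $\clg_\Phi$. This mirrors, in reverse, the Fourier argument already used in Lemma \ref{lemma: G Phi}.

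Finally, uniqueness of $x$ follows since $\Phi x = \Phi x'$ forces $x = x'$ by the isometry of $M_\Phi$. For the norm identity I would rewrite the relation as $zg + \sigma y = e^{i\theta} g + \Phi x$ and compare squared norms of the two sides: on the left, $\langle zg, y \rangle = 0$ (the constant Fourier coefficient of $zg$ vanishes) gives $\|zg + \sigma y\|^2 = \|g\|^2 + \sigma^2\|y\|^2$; on the right, $\langle g, \Phi x \rangle = \langle M_\Phi^* g, x \rangle = 0$ gives $\|e^{i\theta} g + \Phi x\|^2 = \|g\|^2 + \|x\|^2$, whence $\|x\| = \sigma\|y\|$. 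I expect the main obstacle to be the constancy of $h$: everything else is bookkeeping, but that step is what genuinely encodes the boundary pole at $e^{i\theta}$ and pins the defect vectors to the one-parameter family defining $\clg_\Phi$. A secondary point requiring care is justifying $\clm_0 \neq \{0\}$, since the very existence of $\Phi$ (via Beurling--Lax--Halmos) is unavailable without it.
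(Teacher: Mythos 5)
Your proposal is correct and follows essentially the same route as the paper: the same displacement argument $(B^E_{\sigma,e^{i\theta}}-e^{i\theta}I)$ together with the non-square-integrability of $(z-e^{i\theta})^{-1}$ to show $\clm_0\neq\{0\}$, the Beurling--Lax--Halmos theorem to identify $\clm_0=\Phi H^2_{E_2}(\T)\oplus\{0\}$, the Fourier-coefficient computation against $\Phi(z^nf_j)$ to force the quotient $h$ to be a constant, and the two Pythagorean expansions to get $\|x\|=\sigma\|y\|$. No gaps; the only (harmless) cosmetic difference is that you make explicit the choice $\clg=\cld_\clm$ and the orthogonality reasons behind $g\in\clk_\Phi$, which the paper leaves as "easy to check."
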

\begin{proof}
Set
\[
\clm_0 := \clm \cap (\he \oplus \{0\}),
\]
and decompose $\clm$ as
\begin{equation*}
\clm =  \cld_\clm \oplus \clm_0.
\end{equation*}
Since $\clm\nsubseteq H^2_E(\T)\oplus\{0\}$, $\cld_\clm \neq \{0\}$. We need to show that $\clm_0$ is also a nonzero subspace of $\clm$. If possible, let $\clm_0 = \{0\}$. By the assumption, there exists $$F = \begin{bmatrix}
		f\\
		x^\prime
	\end{bmatrix} \in \clm$$ such that $x^\prime \neq 0$. Let us observe that
\begin{align*}
B_{\sigma, e^{i\theta}}^E\begin{bmatrix}
    f \\ x^\prime
\end{bmatrix}-e^{i\theta}\begin{bmatrix}
    f \\ x^\prime
\end{bmatrix}&=\begin{bmatrix}
    zf+\sigma x^\prime \\ e^{i\theta}x^\prime
\end{bmatrix}-e^{i\theta}\begin{bmatrix}
    f \\ x^\prime
\end{bmatrix} =\begin{bmatrix}
    (z-e^{i\theta})f+\sigma x^\prime \\ 0
\end{bmatrix}
\in \clm_0.
\end{align*}
As $\clm_0 = \{0\}$, we have $(z-e^{i\theta})f+\sigma x^\prime=0$, that is,
$$
\frac{\sigma}{e^{i\theta}-z}x^\prime=f\in H^2_E(\T),
$$
which is a contradiction, as $(e^{i\theta}-z)^{-1}$ is not square integrable over $\T$. Thus, $\clm_0 \neq \{0\}$. As $B_{\sigma, e^{i\theta}}^E (\clm) \subseteq \clm$ and $\clm_0 \subseteq \he \oplus \{0\}$, in view of the upper triangular block matrix representation of $B_{\sigma, e^{i\theta}}^E$, it follows that
\[
B_{\sigma, e^{i\theta}}^E(\clm_0) \subseteq \clm_0.
\]
Theorem \ref{th1-br} ensures the existence of a nonzero Hilbert space $E_2$ and an inner function $\Phi\in H_{\clb(E_2,E)}^\infty(\T)$ such that
\[
\clm_0 = \Phi H^2_{E_2}(\T) \oplus \{0\}.
\]
We can therefore rewrite $\clm$ as
\[
\clm = \cld_\clm \oplus \left(\Phi H^2_{E_2}(\T) \oplus \{0\}\right).
\]
Now for any nonzero $\begin{bmatrix}g \\ y\end{bmatrix} \in \cld_\clm$, it is easy to check that $g\in \clk_\Phi$ and $y\neq 0$ in $E$. Moreover, we have
\[
B^E_{\sigma, e^{i\theta}}\begin{bmatrix}
g \\
y
\end{bmatrix}=\begin{bmatrix}
zg+\sigma y\\
e^{i\theta}y
\end{bmatrix}= e^{i\theta}\begin{bmatrix}
g \\
y
\end{bmatrix} \oplus \begin{bmatrix}
(z-e^{i\theta})g+ \sigma y\\
0
\end{bmatrix} \in \clm ,
\]
which, in particular, implies
\[
(z-e^{i\theta})g+ \sigma y\in \Phi H^2_{E_2}(\T),
\]
and hence, there exists a unique $h\in H^2_{E_2}(\T)$ (note that $\Phi$ is an inner function) such that
\begin{equation}\label{repre_g}
zg + \sigma y = e^{i\theta}g + \Phi h.
\end{equation}
Let $\{f_j:j\geq 1\}$ be an orthonormal basis for $E_2$. Then for any $n\geq 1$ and $j\geq 1$, we have (note that $\Phi(0)^* \in \clb(E, E_2)$)
\[
\langle y,\Phi (z^nf_j) \rangle = \langle \Phi(0)^* y, z^nf_j \rangle = 0,
\]
and (note again that $\Phi$ is an inner function)
\[
\langle \Phi h,\Phi (z^nf_j)\rangle = \langle h, z^nf_j \rangle.
\]
By \eqref{repre_g}, we now have
\[
\begin{split}
\langle  h, z^nf_j \rangle & = \langle \Phi h, \Phi(z^nf_j) \rangle
\\
& = \langle (zg + \sigma y)- e^{i\theta}g, \Phi(z^nf_j) \rangle
\\
& = \langle zg, \Phi(z^nf_j) \rangle - e^{i\theta} \langle g , \Phi(z^nf_j) \rangle
\\
& = \langle g, \Phi(z^{n-1}f_j) \rangle - e^{i\theta} \langle g , \Phi(z^nf_j) \rangle
\\
& = 0,
\end{split}
\]
as $g \in \clk_\Phi$. In other words, $h \in E_2$. Let us rename $h$ by $x$. Since $y \in E$, by \eqref{repre_g}, we have
\[
\|g\|^2 + \sigma^2 \|y\|^2= \|g\|^2 + \|x\|^2,
\]
and consequently $\|x\|= \sigma \|y\|$. Finally, again by \eqref{repre_g}, we have $g = \frac{\Phi x-\sigma y}{z-e^{i\theta}}$, completing the proof of the theorem.
\end{proof}

We emphasize that for a Type II invariant subspace $\mathcal{M}$, as described in the above theorem, both $\cld_\clm$ and $\Phi H^2_{E_2}(\T)$ are nonzero.

The description of invariant subspaces of Brownian shifts on $H^2(\T)$ by Agler and Stankus also includes a certain boundary property of the associated inner functions. A similar result holds in the setting of vector-valued Hardy spaces. However, to establish this, we need to use the identification of $H^2_E(\T)$ with the Hardy space $H^2_E(\D)$ of $E$-valued square-summable analytic functions on $\D$ \cite[Chapter V]{NaFo70}.

\begin{remark}\label{rem: boundary}
We remain with the setting of Theorem \ref{th2-br}. We have
\[
\Phi x  -\sigma y= (z-e^{i\theta})g,
\]
where $g \in \clk_\Phi$, $y \in E \setminus \{0\}$, and $x \in E_2$. We treat $g$ as an element of $H_E^2(\D)$ and write the power series expansion $g(z)= \sum_{n=0}^{\infty} x_nz^n$ on $\D$ (note that $x_n \in E$ for all $n$). Then
\[
\begin{split}
\|g(z)\| \leq \sum_{n=0}^{\infty}\|x_n\||z|^n \leq \left(\sum_{n=0}^{\infty}\|x_n\|^2\right)^{\frac{1}{2}}\left(\sum_{n=0}^{\infty}|z|^{2n}\right)^{\frac{1}{2}} = \|g\|_2 \frac{1}{\sqrt{1-|z|^2}},
\end{split}
\]
for all $z\in\D$. Therefore, we have
\[
\|\Phi(z) x  -\sigma y\| \leq |z-e^{i\theta}|\frac{\|g\|_2}{\sqrt{1-|z|^2}},
\]
which after putting $z =re^{i\theta}$ becomes
\[
\|\Phi(re^{i\theta}) x  -\sigma y\| \leq \|g\|_2 \sqrt{\frac{1-r}{1+r}}.
\]
Finally, letting $r\to 1-$ we get that $\Phi(e^{i\theta}) x:= \lim_{r\to 1-}\Phi(re^{i\theta}) x$ exists and
\[
\Phi(e^{i\theta}) x = \sigma y.
\]
In particular, if $E = \mathbb{C}$, then $\Phi(e^{i\theta})$ refers to the existence of the radial limit value of $\Phi$ at $e^{i\theta}$.
\end{remark}

Recall the following observation from the setting of Lemma \ref{lemma: G Phi}:
\[
\langle \clg \rangle \perp \Phi H^2_{E_2}(\T)\oplus\{0\},
\]
for all nonzero $\clg \subseteq \clg_\Phi$. Moreover, if $\begin{bmatrix}g \\ y \end{bmatrix}\in \clg$ is nonzero, then $y \neq 0$. In view of this, we now present a converse to Theorem \ref{th2-br}:

\begin{theorem}\label{thm: type II conv}
Let $\Phi \in H_{\clb(E_2,E)}^\infty(\T)$ be an inner function. Then $\langle \clg \rangle \oplus \left(\Phi H^2_{E_2}(\T) \oplus \{0\}\right)$ is a Type II invariant subspace of $B_{\sigma, e^{i\theta}}^E$ for every nonzero subset $\clg$ of $\clg_\Phi$.
\end{theorem}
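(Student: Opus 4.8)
The plan is to verify the two defining properties of a Type II invariant subspace directly: that $\clm := \langle \clg \rangle \oplus (\Phi H^2_{E_2}(\T) \oplus \{0\})$ is not contained in $\he \oplus \{0\}$, and that it is invariant under $\br^E$. Note first that, by the orthogonality $\langle \clg \rangle \perp \Phi H^2_{E_2}(\T) \oplus \{0\}$ recalled just before the statement, $\clm$ is a genuine orthogonal direct sum of two closed subspaces and hence is itself a closed subspace. For the Type II condition, I would pick any nonzero $\begin{bmatrix} g \\ y \end{bmatrix} \in \clg$ (such a vector exists since $\clg$ is nonzero); Lemma \ref{lemma: G Phi} then forces $y \neq 0$, so $\clm$ contains a vector with nonzero second coordinate and therefore $\clm \nsubseteq \he \oplus \{0\}$. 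This part is immediate.

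The substance is invariance. Since $\br^E$ is bounded and $\clm$ is the closed linear span of $\clg \cup (\Phi H^2_{E_2}(\T) \oplus \{0\})$, it suffices to show that $\br^E$ carries each of these two generating families into $\clm$; linearity then sends the linear span into $\clm$, and continuity, together with the fact that $\clm$ is closed, upgrades this to $\br^E(\clm) \subseteq \clm$. On the second family, for $f \in H^2_{E_2}(\T)$ the upper-triangular form of $\br^E$ gives $\br^E \begin{bmatrix} \Phi f \\ 0 \end{bmatrix} = \begin{bmatrix} z\Phi f \\ 0 \end{bmatrix}$, and the intertwining relation $M_\Phi S_{E_2} = S_E M_\Phi$ rewrites $z\Phi f = \Phi(zf) \in \Phi H^2_{E_2}(\T)$; thus this part stays inside $\Phi H^2_{E_2}(\T) \oplus \{0\} \subseteq \clm$.

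The key computation is on $\clg$. For $\begin{bmatrix} g \\ y \end{bmatrix} \in \clg \subseteq \clg_\Phi$, membership in $\clg_\Phi$ means $g = \frac{\Phi x - \sigma y}{z - e^{i\theta}}$ for some $x \in E_2$, equivalently $zg + \sigma y = e^{i\theta} g + \Phi x$. Hence
\[
\br^E \begin{bmatrix} g \\ y \end{bmatrix} = \begin{bmatrix} zg + \sigma y \\ e^{i\theta} y \end{bmatrix} = e^{i\theta} \begin{bmatrix} g \\ y \end{bmatrix} + \begin{bmatrix} \Phi x \\ 0 \end{bmatrix}.
\]
The first summand lies in $\langle \clg \rangle$, and, crucially, $x \in E_2$ is a constant function in $H^2_{E_2}(\T)$, so $\begin{bmatrix} \Phi x \\ 0 \end{bmatrix} \in \Phi H^2_{E_2}(\T) \oplus \{0\}$; both summands therefore lie in $\clm$. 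This closes the invariance check, and combined with the Type II observation it completes the proof.

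I do not anticipate a serious obstacle here: the argument is essentially the computation above read off from the defining relation of $\clg_\Phi$. The only point requiring care is the reduction to generators, namely confirming that it suffices to test invariance on the two families $\clg$ and $\Phi H^2_{E_2}(\T) \oplus \{0\}$ rather than on all of $\clm$; this is justified because $\br^E$ is continuous and $\clm$ is closed, so invariance on a spanning set propagates to the closed span. The identity $zg + \sigma y = e^{i\theta} g + \Phi x$ is exactly what makes the image of the $\clg$-part split into a piece of $\langle \clg \rangle$ and a piece of $\Phi H^2_{E_2}(\T) \oplus \{0\}$.
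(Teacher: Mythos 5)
Your proof is correct and follows essentially the same route as the paper's: the same splitting $\br^E\begin{bmatrix} g \\ y \end{bmatrix} = e^{i\theta}\begin{bmatrix} g \\ y \end{bmatrix} + \begin{bmatrix} \Phi x \\ 0 \end{bmatrix}$ coming from the defining relation of $\clg_\Phi$, the same observation for the $\Phi H^2_{E_2}(\T)\oplus\{0\}$ part, and the same passage from the spanning set to the closed span via boundedness of $\br^E$. You are slightly more explicit than the paper in verifying the Type II condition via Lemma \ref{lemma: G Phi}, but this is only a matter of presentation.
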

\begin{proof}
For any $f\in H^2_{E_2}(\T)$, we have
\[
B_{\sigma, e^{i\theta}}^E\begin{bmatrix}
\Phi f\\
0
\end{bmatrix}= \begin{bmatrix}
\Phi zf\\
0
\end{bmatrix}\in \Phi H^2_{E_2}(\T) \oplus \{0\},
\]
and for $\begin{bmatrix}g \\ y \end{bmatrix}\in \clg$,
\begin{align*}
B_{\sigma, e^{i\theta}}^E\begin{bmatrix}
g\\
y
\end{bmatrix}= \begin{bmatrix}
zg+ \sigma y\\
e^{i\theta}y
\end{bmatrix}=e^{i\theta}\begin{bmatrix}
g \\
y
\end{bmatrix} + \begin{bmatrix}
(z-e^{i\theta})g+ \sigma y\\
0
\end{bmatrix}
 = e^{i\theta}\begin{bmatrix}
g \\
y
\end{bmatrix} + \begin{bmatrix}
\Phi x\\
0
\end{bmatrix},
\end{align*}
for some $x\in E_2$. Therefore,
$$
B_{\sigma, e^{i\theta}}^E(\textrm{span}(\clg))\subseteq\clm.
$$
Since $B_{\sigma, e^{i\theta}}^E$ is a bounded linear operator, $\clm$ is a closed subspace, we have $B_{\sigma, e^{i\theta}}^E(\langle \clg \rangle) \subseteq \clm$. This proves that $\langle \clg \rangle \oplus \left(\Phi H^2_{E_2}(\T) \oplus \{0\}\right)$ is a Type II invariant subspace of $B_{\sigma, e^{i\theta}}^E$.
\end{proof}

Summarizing Theorems \ref{th2-br} and \ref{thm: type II conv}, we obtain the following characterization of Type II invariant subspaces of Brownian shifts:

\begin{theorem}\label{thm: type 2 complete}
Let $\clm \nsubseteq \he \oplus \{0\}$ be a nonzero closed subspace of $\he \oplus E$. Then $\clm$ is invariant under $B^E_{\sigma, e^{i\theta}}$ if and only if there exists an inner function $\Phi \in H_{\clb(E_2, E)}^\infty(\T)$ for some nonzero Hilbert space $E_2$, and a nonzero subset $\clg \subseteq \clg_\Phi$ such that
\[
\clm =  \langle \clg \rangle \oplus \left(\Phi H^2_{E_2}(\T) \oplus \{0\}\right).
\]
\end{theorem}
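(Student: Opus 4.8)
The plan is to recognize this statement as the packaging of two results already in hand: the necessity direction is precisely Theorem~\ref{th2-br}, and the sufficiency direction is precisely Theorem~\ref{thm: type II conv}. I would therefore prove the biconditional by treating each implication separately, invoking the corresponding theorem, with only a small amount of bookkeeping needed to reconcile the two formulations.

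For the forward implication, I would assume that $\clm$ is a nonzero closed subspace with $\clm \nsubseteq \he \oplus \{0\}$ that is invariant under $B^E_{\sigma, e^{i\theta}}$. Theorem~\ref{th2-br} then supplies a nonzero Hilbert space $E_2$, an inner function $\Phi \in H^\infty_{\clb(E_2,E)}(\T)$, and the orthogonal decomposition $\clm = \cld_\clm \oplus (\Phi H^2_{E_2}(\T) \oplus \{0\})$ with $\cld_\clm = \langle \clg \rangle$ for some $\clg \subseteq \clg_\Phi$. The one point requiring care is to exhibit a genuinely \emph{nonzero} subset of $\clg_\Phi$: here I would simply take $\clg = \cld_\clm$, since the ``moreover'' clause of Theorem~\ref{th2-br} shows that every element of $\cld_\clm$ lies in $\clg_\Phi$, so $\cld_\clm \subseteq \clg_\Phi$ and $\langle \cld_\clm \rangle = \cld_\clm$. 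Because $\clm \nsubseteq \he \oplus \{0\}$ forces $\cld_\clm \neq \{0\}$, this $\clg$ is nonzero, and the claimed representation follows at once.

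For the converse, I would assume $\clm = \langle \clg \rangle \oplus (\Phi H^2_{E_2}(\T) \oplus \{0\})$ for an inner $\Phi \in H^\infty_{\clb(E_2,E)}(\T)$ and a nonzero $\clg \subseteq \clg_\Phi$. This is exactly the hypothesis of Theorem~\ref{thm: type II conv}, which already establishes that such a subspace is invariant under $B^E_{\sigma, e^{i\theta}}$ and is of Type II, so I would cite it directly. I would also record, as the remark preceding Theorem~\ref{thm: type II conv} does, that $\langle \clg \rangle \perp \Phi H^2_{E_2}(\T)\oplus\{0\}$ and that any nonzero element of $\clg$ has nonzero second coordinate; this confirms that the displayed sum is genuinely orthogonal and that $\clm \nsubseteq \he \oplus \{0\}$, so the subspace is of Type II as required.

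Since both halves are already proved in the preceding results, there is essentially no analytic obstacle; the proof is a direct assembly. The only thing to watch is the consistency of notation between the two source theorems—in particular, confirming that the set $\clg$ produced in the necessity direction can be taken nonzero and contained in $\clg_\Phi$, which is exactly the content of the final sentence of Theorem~\ref{th2-br}.
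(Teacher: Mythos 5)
Your proposal is correct and matches the paper exactly: the paper states this theorem as a direct summary of Theorems~\ref{th2-br} and \ref{thm: type II conv}, which is precisely the assembly you carry out. Your extra care in taking $\clg = \cld_\clm$ (nonzero because $\clm \nsubseteq \he \oplus \{0\}$, and contained in $\clg_\Phi$ by the ``moreover'' clause of Theorem~\ref{th2-br}) is the right bookkeeping and is consistent with the paper's implicit argument.
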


The results of this section, when specialized to the case $E= \mathbb{C}$ (that is, the scalar case), recover the results of Agler and Stankus. We will elaborate on this in Section \ref{sec: scalar}.

\section{Equivalent invariant subspaces}\label{sec: Unit equiv}

By equivalent invariant subspaces, we mean unitarily equivalent invariant subspaces of Brownian shifts. In other words, given a pair of nonzero invariant subspaces $\clm_1$ and $\clm_2$, we consider the restriction operators $B^E_{\sigma_1, e^{i\theta_1}}{\big|_{\clm_1}}$ and $B^E_{\sigma_2, e^{i\theta_2}}{\big|_{\clm_2}}$, and ask: when is
\[
B^E_{\sigma_1, e^{i\theta_1}}{\big|_{\clm_1}} \cong B^E_{\sigma_2, e^{i\theta_2}}{\big|_{\clm_2}}?
\]
The following theorem provides a concrete answer in terms of inner functions. At this point, it is convenient to recall the definition of the canonical representations of invariant subspaces of $\br^E$, as given in Definition \ref{def: canonical}.

\begin{theorem}\label{thm: unit equiv}
Fix angles $\theta_1, \theta_2 \in [0, 2 \pi)$ and covariances $\sigma_1, \sigma_2 > 0$. Let $\clm_1$ and $\clm_2$ be nonzero closed invariant subspaces of the Brownian shifts $B^E_{\sigma_1, e^{i\theta_1}}$ and $B^E_{\sigma_2, e^{i\theta_2}}$, respectively. Then
\[
B^E_{\sigma_1, e^{i\theta_1}}{\big|_{\clm_1}} \cong B^E_{\sigma_2, e^{i\theta_2}}{\big|_{\clm_2}},
\]
if and only if any one of the following conditions is true:
\begin{itemize}
\item[(1)]
Both $\clm_1$ and $\clm_2$ are Type I, and if $\clm_i= \Phi_i H_{E_i}^2(\T)\oplus\{0\}$ for $i=1, 2$, then $\dim E_1 = \dim E_2$.
\item[(2)]
Both $\clm_1$ and $\clm_2$ are Type II. Furthermore, $\theta_1=\theta_2$, and if $\clm_j= \langle \clg_j \rangle \oplus(\Phi_j H_{E_j}^2(\T)\oplus\{0\})$ is the canonical representation of $\clm_j$, $j=1,2$, then there exist a pair of unitaries $U_{\clg}: \langle \clg_1 \rangle\raro \langle \clg_2 \rangle$ and $U_E: E_1\raro E_2$, such that
\[
U_Ex_1^\prime=x_2^\prime,
\]
whenever
\[
U_{\clg}\begin{bmatrix}
    g_1 \\ y_1
\end{bmatrix}
=\begin{bmatrix}
    g_2\\ y_2
\end{bmatrix},
\]
where $\begin{bmatrix} g_j\\ y_j \end{bmatrix}\in \langle \clg_j \rangle$ with
\[
g_j=\frac{\Phi_jx_j^\prime-\sigma_j y_j}{z-e^{i\theta_j}},
\]
for some unique $x_j^\prime\in E_j$, $j=1,2$.
\end{itemize}
\end{theorem}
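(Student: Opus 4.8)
The plan is to reduce the problem to a handful of intrinsic invariants of the restriction operators $T_j := B^E_{\sigma_j, e^{i\theta_j}}\big|_{\clm_j}$, all extracted from the defect operator $T_j^*T_j - I$. First I would record the elementary computation
\[
\big(B^E_{\sigma, e^{i\theta}}\big)^* B^E_{\sigma, e^{i\theta}} = \begin{bmatrix} I & 0 \\ 0 & (\sigma^2+1)I_E\end{bmatrix},
\]
which uses $S_E^* S_E = I$, $S_E^* i_E = 0$ and $i_E^* i_E = I_E$. Writing $Q$ for the orthogonal projection of $\he\oplus E$ onto $\{0\}\oplus E$, it follows that any invariant subspace $\clm$ with restriction $T = B^E_{\sigma,e^{i\theta}}\big|_\clm$ satisfies $T^*T - I_\clm = \sigma^2 P_\clm Q\big|_\clm \geq 0$, and a short pairing argument then gives
\[
\ker(T^*T - I_\clm) = \clm \cap (\he \oplus \{0\}).
\]
For a Type I subspace this kernel is all of $\clm$, so $T$ is an isometry, whereas for a Type II subspace it is the proper subspace $\clm_0 := \Phi H^2_{E_2}(\T)\oplus\{0\}$, so $T$ fails to be isometric. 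Since being an isometry is preserved under unitary equivalence, this immediately rules out equivalence between a Type I and a Type II restriction, leaving only the two same-type cases to analyze.

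In the Type I case, Theorem \ref{th1-br} together with the intertwining $M_{\Phi} S_{E_i} = S_E M_\Phi$ shows that $T_i \cong S_{E_i}$, a shift of multiplicity $\dim E_i$. As $\dim\ker S_{E_i}^* = \dim E_i$ is a unitary invariant, I would conclude that $T_1 \cong T_2$ if and only if $\dim E_1 = \dim E_2$, which is condition (1); the converse uses that any unitary $W:E_1\raro E_2$ induces the intertwining multiplier $M_W$. This step is routine.

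For the Type II case, the identity $\ker(T^*T-I)=\clm_0$ makes both $\clm_0$ and the defect space $\cld_\clm=\langle\clg\rangle$ canonical, so any unitary $U$ with $U T_1 = T_2 U$ must be block diagonal, $U = U_\clg\oplus U_0$, with respect to $\clm_j = \cld_{\clm_j}\oplus\clm_0^{(j)}$. Expressing $T_j$ in the same decomposition, the identity from Theorem \ref{th2-br} gives the lower triangular form
\[
T_j = \begin{bmatrix} e^{i\theta_j} I & 0 \\ C_j & D_j\end{bmatrix},
\qquad
C_j\begin{bmatrix} g\\ y\end{bmatrix} = \begin{bmatrix} \Phi_j x'\\ 0\end{bmatrix},
\qquad
D_j \cong S_{E_j}.
\]
I would then expand $U T_1 = T_2 U$ entrywise: the $(1,1)$ block forces $e^{i\theta_1}=e^{i\theta_2}$, hence $\theta_1=\theta_2$; the $(2,2)$ block says $U_0$ intertwines $S_{E_1}$ and $S_{E_2}$; and the $(2,1)$ block reads $U_0 C_1 = C_2 U_\clg$. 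Invoking the classical fact that a unitary intertwining two unilateral shifts carries constants to constants, and is hence multiplication $M_{U_E}$ by a constant unitary $U_E:E_1\raro E_2$, I would identify $U_0$ with $M_{U_E}$ under $\Phi_j H^2_{E_j}(\T)\cong H^2_{E_j}(\T)$. Since $C_j$ records precisely the constant vector $x'=x_j'$ appearing in the canonical representation, the relation $U_0 C_1 = C_2 U_\clg$ then translates into $U_E x_1' = x_2'$ whenever $U_\clg\big[\begin{smallmatrix} g_1\\ y_1\end{smallmatrix}\big]=\big[\begin{smallmatrix} g_2\\ y_2\end{smallmatrix}\big]$, which is exactly condition (2). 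This proves necessity.

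For sufficiency in the Type II case I would reverse the construction: given $\theta_1=\theta_2$ and unitaries $U_\clg,U_E$ satisfying the compatibility relation, define $U_0$ on $\clm_0^{(1)}$ by $\big[\begin{smallmatrix}\Phi_1 f\\0\end{smallmatrix}\big]\mapsto\big[\begin{smallmatrix}\Phi_2(U_E f)\\0\end{smallmatrix}\big]$, which is unitary and intertwines $D_1,D_2$ because the constant multiplier $U_E$ commutes with $z$. Putting $U=U_\clg\oplus U_0$, the $(1,1)$ and $(2,2)$ blocks of $U T_1=T_2 U$ hold by design, while the $(2,1)$ block $U_0 C_1 = C_2 U_\clg$ follows on each generator of $\clg_1$ from $U_E x_1'=x_2'$ and extends to $\langle\clg_1\rangle$ by linearity and continuity. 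I expect the main obstacle to be the conceptual step of recognizing $\clm_0$ intrinsically as $\ker(T^*T-I)$: once the splitting $\clm=\cld_\clm\oplus\clm_0$ is seen to be canonical, the rest reduces to block-matrix bookkeeping plus the standard classification of unitary intertwiners of shifts. A secondary point needing care is that the map $\big[\begin{smallmatrix} g\\ y\end{smallmatrix}\big]\mapsto x'$ is well defined and linear on all of $\langle\clg\rangle$, which is exactly what the uniqueness clause of Theorem \ref{th2-br} provides.
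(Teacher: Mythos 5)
Your argument is correct, and in the necessity direction it takes a genuinely different route from the paper's. The paper distinguishes Type I from Type II restrictions by comparing operator norms ($\|T\|=1$ versus $\|T\|>1$), and then, for two Type II subspaces, runs a sequence of norm identities to show that the intertwining unitary $U$ carries $\Phi_1H^2_{E_1}(\T)\oplus\{0\}$ into $\Phi_2H^2_{E_2}(\T)\oplus\{0\}$ and $\langle\clg_1\rangle$ into $\langle\clg_2\rangle$ (and the same for $U^*$); it also proves from scratch, in its Type I step, that a unitary intertwiner of two vector-valued shifts sends constants to constants. You instead compute $\bigl(B^E_{\sigma,e^{i\theta}}\bigr)^*B^E_{\sigma,e^{i\theta}}-I=\sigma^2Q$ and observe that $\ker(T^*T-I)=\clm\cap(\he\oplus\{0\})$, which exhibits $\clm_0$ and $\cld_\clm$ as intrinsic unitary invariants of the restriction: the cross-type exclusion (isometry versus non-isometry) and the block-diagonality of $U$ with respect to $\clm=\cld_\clm\oplus\clm_0$ both follow at once, replacing the paper's longest computations with a one-line defect-operator identity. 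From there your lower-triangular block form of $T_j$ yields $\theta_1=\theta_2$ from the $(1,1)$ entry and the compatibility relation $U_Ex_1'=x_2'$ from the $(2,1)$ entry, matching the paper's conclusions; the sufficiency direction is essentially the paper's explicit construction. In a full write-up you should make explicit the two points you already flag: the standard classification of unitary intertwiners of unilateral shifts (which the paper proves by hand and which you may either cite or reproduce), and the fact that the assignment sending an element of $\langle\clg\rangle$ to its associated vector $x'\in E_2$ is well defined, linear, and bounded, which is exactly what the uniqueness clause and the identity $\|x\|=\sigma\|y\|$ in Theorem \ref{th2-br} provide.
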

\begin{proof}
Let us start with the proof of the ``if'' part. Provided that condition (1) holds, we assume $\clm_i=\Phi_i H_{E_i}^2(\T)\oplus\{0\}$ for some inner functions $\Phi_i\in H_{\clb(E_i, E)}^\infty(\T)$, $i=1, 2$, with $\dim E_1 = \dim E_2$. Then there exists a unitary operator $\hat{U}$ between $E_1$ and $E_2$. Consider the operator $U:\clm_1\to \clm_2$, defined by
\[
U\begin{bmatrix} \Phi_1 h\\ 0\end{bmatrix}=\begin{bmatrix} \Phi_2 \tilde{U}h\\ 0\end{bmatrix}
\]
for all $h \in H_{E_1}^2(\T)$, where $\tilde{U}: H_{E_1}^2(\T)\raro H_{E_2}^2(\T)$ is the unitary operator induced by $\hat{U}$, acting on $h$ as follows:
\begin{equation}\label{Brown_v_1}
\tilde{U} \tilde h=\sum_{n=0}^\infty z^n\hat{U}\tilde{x}_n,
\end{equation}
for all $\tilde h=\sum_{n=0}^\infty \tilde{x}_n z^n\in H_{E_1}^2(\T)$. Note that
\[
z^k\tilde{U}h=\tilde{U}z^kh,
\]
for all $k\geq 0$. It is clear from the construction that $U$ is a surjective isometry between $\clm_1$ and $\clm_2$ and therefore, is unitary. Moreover,
\begin{align*}
B^E_{\sigma_2, e^{i\theta_2}}U\begin{bmatrix} \Phi_1 h\\ 0\end{bmatrix}&
= \begin{bmatrix} z\Phi_2\tilde{U}h\\ 0\end{bmatrix}
=\begin{bmatrix}
    \Phi_2\tilde{U}(zh)\\ 0\end{bmatrix}
= U\begin{bmatrix} \Phi_1 zh\\ 0\end{bmatrix}
= UB^E_{\sigma_1, e^{i\theta_1}}\begin{bmatrix} \Phi_1 h\\ 0\end{bmatrix},
\end{align*}
which implies $B^E_{\sigma_1, e^{i\theta_1}}{\big|_{\clm_1}} \cong B^E_{\sigma_2, e^{i\theta_2}}{\big|_{\clm_2}}$. Next, we assume that (2) is true. Set
\[
\theta=\theta_1=\theta_2.
\]
Define a linear operator $U:\clm_1\to \clm_2$ by
\[
U\left(\begin{bmatrix}\Phi_1 h \\ 0\end{bmatrix} + \begin{bmatrix} g_1\\ y_1 \end{bmatrix}\right)
= \begin{bmatrix}\Phi_2 \tilde{U}_Eh \\ 0\end{bmatrix} + U_\clg \begin{bmatrix} g_1\\ y_1\end{bmatrix},
\]
for $h\in H_{E_1}^2(\T)$ and $\begin{bmatrix}
g_1\\ y_1\end{bmatrix}\in \langle \clg_1 \rangle$, where $\tilde{U}_E$ is the unitary operator from $H_{E_1}^2(\T)$ to $H_{E_2}^2(\T)$, induced by $U_E$ in the same way as in $(\ref{Brown_v_1})$. It is evident from the construction that $U$ is surjective. In addition, since $U_\clg\begin{bmatrix} g_1 \\ y_1\end{bmatrix}\in \langle \clg_2 \rangle$, we have
\begin{align*}
\left\|\begin{bmatrix}\Phi_2 \tilde{U}_{E}h \\ 0\end{bmatrix} + U_\clg\begin{bmatrix} g_1\\ y_1\end{bmatrix}\right\|^2
=\|h\|^2+\|g_1\|^2+\|y_1\|^2=\left\|\begin{bmatrix}\Phi_1 h \\ 0\end{bmatrix}+\begin{bmatrix} g_1\\ y_1\end{bmatrix}\right\|^2.
\end{align*}
This implies that $U$ is an isometry, and therefore, is a unitary operator as well. For notational simplicity, set
\[
F = \begin{bmatrix}\Phi_1 h \\ 0\end{bmatrix} +  \begin{bmatrix} g_1\\ y_1\end{bmatrix}.
\]
Assuming $U_\clg\begin{bmatrix}
    g_1 \\ y_1
\end{bmatrix}=\begin{bmatrix}
    g_2 \\ y_2
\end{bmatrix}$, we observe that
\begin{align*}
B^E_{\sigma_2, e^{i\theta}}U F
&=B^E_{\sigma_2, e^{i\theta}}\left(\begin{bmatrix}\Phi_2 \tilde{U}_Eh \\ 0\end{bmatrix}+\begin{bmatrix} g_2\\ y_2\end{bmatrix}\right)
=\begin{bmatrix}z\Phi_2 \tilde{U}_Eh \\ 0\end{bmatrix}+\begin{bmatrix} zg_2+\sigma_2y_2\\ e^{i\theta}y_2\end{bmatrix}.
\end{align*}
At this point, we recall that $g_j(z)=\frac{\Phi_j x_j^\prime-\sigma_j y_j}{z-e^{i\theta_j}}$ for a unique $x_j^\prime\in E_j$, $j=1, 2$. This yields
\[
z g_j + \sigma_j y_j = e^{i\theta} g_j + \Phi_jx_j^\prime,
\]
for $j=1,2$. Therefore, recalling that $\tilde{U}_Ex_1^\prime=U_Ex_1^\prime=x_2^\prime$ whenever $U_\clg\begin{bmatrix}
    g_1 \\ y_1
\end{bmatrix}=\begin{bmatrix}
g_2 \\ y_2
\end{bmatrix}$, and that $z^k\tilde{U}_Eh=\tilde{U}_E z^kh$ for any $k\geq0$, we have
\begin{align*}
B^E_{\sigma_2, e^{i\theta}}U F & = \begin{bmatrix}z\Phi_2 \tilde{U}_Eh \\ 0\end{bmatrix}+\begin{bmatrix} zg_2+\sigma_2y_2\\ e^{i\theta}y_2\end{bmatrix}
\\
&=U\left(\begin{bmatrix}\Phi_1 zh \\ 0\end{bmatrix}+e^{i\theta}\begin{bmatrix} g_1\\ y_1\end{bmatrix}\right)+\begin{bmatrix}\Phi_2\tilde{U}_E x_1^\prime\\ 0\end{bmatrix}
\\
&=U\left(\begin{bmatrix}\Phi_1 zh \\ 0\end{bmatrix}+e^{i\theta}\begin{bmatrix} g_1\\ y_1\end{bmatrix}+\begin{bmatrix}\Phi_1x_1^\prime\\ 0\end{bmatrix}\right)\\
&=U\left(\begin{bmatrix}z\Phi_1 h \\ 0\end{bmatrix}+\begin{bmatrix} zg_1+\sigma_1y_1\\ e^{i\theta}y_1\end{bmatrix}\right)\\
&=UB^E_{\sigma_1, e^{i\theta}}\left(\begin{bmatrix}\Phi_1 h \\ 0\end{bmatrix}+\begin{bmatrix} g_1\\ y_1\end{bmatrix}\right),
\end{align*}
that is,
\[
B^E_{\sigma_2, e^{i\theta}}U F = UB^E_{\sigma_1, e^{i\theta}} F,
\]
which again ensures that $B^E_{\sigma_1, e^{i\theta_1}}{\big|_{\clm_1}} \cong B^E_{\sigma_2, e^{i\theta_2}}{\big|_{\clm_2}}$. We now turn to the converse. For simplicity, we divide the proof into three parts:

\textsf{Step 1:} We claim that if we assume, without loss of generality, that $\clm_1=\Phi H_{E^\prime}^2(\T)\oplus\{0\}$ is of Type I, and $\clm_2=  \langle\clg\rangle\oplus(\Psi H_{E^{\prime\prime}}^2(\T)\oplus\{0\})$ is of Type II, for certain inner functions $\Phi$ and $\Psi$, then $B^E_{\sigma_1, e^{i\theta_1}}{\big|_{\clm_1}}$ and $B^E_{\sigma_2, e^{i\theta_2}}{\big|_{\clm_2}}$ are not unitarily equivalent. Indeed, if it is not true, then, in particular, we will have the norm identity
\[
\left\|B^E_{\sigma_1, e^{i\theta_1}}{\big|_{\clm_1}}\right\|= \left\|B^E_{\sigma_2, e^{i\theta_2}}{\big|_{\clm_2}}\right\|.
\]
However, for any $h\in H_{E^\prime}^2(\T)$, we have
\begin{align*}
\left\|B^E_{\sigma_1, e^{i\theta_1}}\begin{bmatrix} \Phi h\\ 0\end{bmatrix}\right\| =\left\|\begin{bmatrix} z\Phi h\\ 0\end{bmatrix}\right\|
=\left\|\begin{bmatrix} \Phi h\\ 0\end{bmatrix}\right\|,
\end{align*}
that is, $\left\|B^E_{\sigma_1, e^{i\theta_1}}{\big|_{\clm_1}}\right\|=1$. On the other hand, we have
\begin{align*}
\left\|B^E_{\sigma_2, e^{i\theta_2}}\begin{bmatrix} g\\ y\end{bmatrix}\right\|^2 =\left\|\begin{bmatrix} zg+\sigma_2y\\ e^{i\theta_2}y\end{bmatrix}\right\|^2
= \|y\|^2+\|g\|^2+\sigma_2^2\|y\|^2
= \left\|\begin{bmatrix} g\\
y\end{bmatrix}\right\|^2+\sigma_2^2\|y\|^2 >\left\|\begin{bmatrix} g\\
y\end{bmatrix}\right\|^2,
\end{align*}
for any nonzero $\begin{bmatrix}g \\ y\end{bmatrix}\in \langle\clg\rangle$ (recall that $y\neq 0$ in this case), and hence
\[
\left\|B^E_{\sigma_2, e^{i\theta_2}}{\big|_{\clm_2}}\right\|>1,
\]
which leads to a contradiction. Therefore, $\clm_1$ and $\clm_2$ must be of same type.

\textsf{Step 2:} Suppose both of them are Type I with the canonical decomposition
\[
\clm_i=\Phi_i H_{E_i}^2(\T)\oplus\{0\},
\]
for some inner function $\Phi_i\in H^\infty_{\clb(E_i, E)}(\T)$, $i=1, 2$. Now according to our hypothesis, there exists a unitary operator $U: \clm_1\raro\clm_2$ such that $UB^E_{\sigma_1, e^{i\theta_1}}\big|_{\clm_1}= B^E_{\sigma_2, e^{i\theta_2}}\big|_{\clm_2}U$. First, let us observe that for $h_1 \in H^2_{E_1}(\T)$, if
$$
U\begin{bmatrix}\Phi_1h_1 \\0\end{bmatrix}=\begin{bmatrix}
    \Phi_2h_2 \\ 0
\end{bmatrix},
$$
for some $h_2 \in H^2_{E_2}(\T)$, then
$$
U\begin{bmatrix}z^n\Phi_1 h_1 \\0\end{bmatrix}
= U\left(B^E_{\sigma_1, e^{i\theta_1}}\right)^n\begin{bmatrix}\Phi_1 h_1 \\0\end{bmatrix} = \left(B^E_{\sigma_2, e^{i\theta_2}}\right)^n U \begin{bmatrix}\Phi_1 h_1 \\0\end{bmatrix}
= \left(B^E_{\sigma_2, e^{i\theta_2}}\right)^n\begin{bmatrix}
\Phi_2 h_2\\ 0
\end{bmatrix},
$$
that is,
\[
U\begin{bmatrix}z^n\Phi_1 h_1 \\0\end{bmatrix} = \begin{bmatrix}z^n\Phi_2 h_2 \\0
\end{bmatrix},
\]
for all $n\geq 0$. Now, we take any $x_1^{\prime\prime}\in E_1$, and assume that
$$
U\begin{bmatrix}\Phi_1 x_1^{\prime\prime} \\0\end{bmatrix}=\begin{bmatrix}
    \Phi_2 h^\prime \\ 0
\end{bmatrix},
$$
for some $h^\prime(z)=\sum_{n=0}^\infty z^n\tilde{x}_n^\prime\in H_{E_2}^2(\T)$. Suppose $\tilde{x}_k^\prime\neq 0$ for some $k\geq 1$, and also suppose
$U\begin{bmatrix}
    \Phi_1 g_k \\0
\end{bmatrix}
=\begin{bmatrix}
   \Phi_2 \tilde{x}_k^\prime \\0
\end{bmatrix}$ for some $g_k\in H_{E_1}^2(\T)$. Therefore, $\|\tilde{x}_k^\prime\|^2=\langle \Phi_2h^\prime, \Phi_2z^k \tilde{x}_k^\prime\rangle$ implies
\begin{align*}
\|\tilde{x}_k^\prime\|^2 = \left\langle \begin{bmatrix}\Phi_2 h^\prime \\0
\end{bmatrix}, \begin{bmatrix} z^k\Phi_2\tilde{x}_k^\prime \\0
\end{bmatrix} \right\rangle = \left\langle U\begin{bmatrix}\Phi_1 x_1^{\prime\prime} \\0\end{bmatrix}, U\begin{bmatrix}
    z^k\Phi_1g_k \\0
\end{bmatrix} \right\rangle = \left\langle \begin{bmatrix}\Phi_1 x_1^{\prime\prime} \\0\end{bmatrix}, \begin{bmatrix}
    \Phi_1z^kg_k \\0
\end{bmatrix} \right\rangle,
\end{align*}
and hence $\|\tilde{x}_k^\prime\|^2 = \langle x^{\prime\prime}_1, z^k g_k\rangle =0$ for any $k\geq 1$, which ensures that $h^\prime\in E_2$. On the other hand, given $x_2^{\prime\prime}\in E_2$, there exists $h(z)=\sum_{n=0}^\infty z^n\tilde{x}_n\in H^2_{E_1}(\T)$ such that
$$
U\begin{bmatrix}
\Phi_1 h\\0
\end{bmatrix}=\begin{bmatrix}
    \Phi_2x_2^{\prime\prime} \\0
\end{bmatrix}.
$$
If $\tilde{x}_k\neq 0$ for some $k\geq 1$, and if $U\begin{bmatrix}
 \Phi_1 \tilde{x}_k \\0
\end{bmatrix}=\begin{bmatrix}
    \Phi_2 h^{\prime\prime}\\ 0
\end{bmatrix}$
for some $h^{\prime\prime}\in H^2_{E_2}(\T)$, then $\|\tilde{x}_k\|^2=\langle \Phi_1 h, \Phi_1z^k\tilde{x}_k\rangle$ implies
\begin{align*}
\|\tilde{x}_k\|^2 & =\left\langle U\begin{bmatrix}
    \Phi_1 h \\0
\end{bmatrix}, U\begin{bmatrix}
    z^k\Phi_1 \tilde{x}_k \\0
\end{bmatrix} \right\rangle = \left\langle \begin{bmatrix}
    \Phi_2 x_2^{\prime\prime} \\0
\end{bmatrix}, \begin{bmatrix}
    z^k\Phi_2 h^{\prime\prime} \\0
\end{bmatrix} \right\rangle = \langle x_2^{\prime\prime}, z^k h^{\prime\prime}\rangle = 0,
\end{align*}
thereby implying $h\in E_1$. The above information allows us to define $U_1: E_1\raro E_2$ by
\[
U_1x_1=x_2,
\]
whenever
\[
U\begin{bmatrix}
    \Phi_1 x_1 \\0
\end{bmatrix}=\begin{bmatrix}
    \Phi_2 x_2\\0
\end{bmatrix}.
\]
It is easy to check that $U_1$ is a well-defined, surjective as well as isometric linear map between $E_1$ and $E_2$, and therefore, is unitary. Thus, $\dim E_1 = \dim E_2$.

\textsf{Step 3:} Let us now assume that both $\clm_1$ and $\clm_2$ are Type II, with the canonical representations $\clm_j=\langle \clg_j \rangle\oplus(\Phi_j H_{E_j}^2(\T)\oplus\{0\})$, $j=1,2$, as described in the statement of this theorem, and, like in the previous part of the proof, $U:\clm_1\to \clm_2$ is the unitary operator satisfying the intertwining relation $U B^E_{\sigma_1, e^{i\theta_1}}{\big|_{\clm_1}} = B^E_{\sigma_2, e^{i\theta_2}}{\big|_{\clm_2}} U$. Suppose now for a given $h_1\in H^2_{E_1}(\T)$,
$$
U\begin{bmatrix}
    \Phi_1h_1\\ 0
\end{bmatrix}
=\begin{bmatrix}
    \Phi_2h_2\\0
\end{bmatrix}+\begin{bmatrix}
    g_2^\prime\\ y_2^\prime
\end{bmatrix}
$$
for some $h_2\in H^2_{E_2}(\T)$ and $\begin{bmatrix}
    g_2^\prime\\ y_2^\prime
\end{bmatrix}\in \langle \clg_2 \rangle$. In particular, we have
$$
\|h_1\|^2=\|h_2\|^2+\|g_2^\prime\|^2+\|y_2^\prime\|^2.
$$
At the same time, we have
$$
U\begin{bmatrix}
    z\Phi_1h_1\\0
\end{bmatrix}=UB^E_{\sigma_1, e^{i\theta_1}}\begin{bmatrix}\Phi_1h_1 \\0\end{bmatrix}=B^E_{\sigma_2, e^{i\theta_2}}U\begin{bmatrix}
 \Phi_1h_1\\0
\end{bmatrix}=\begin{bmatrix}
z\Phi_2h_2\\0
\end{bmatrix}+\begin{bmatrix}
zg_2^\prime+\sigma_2y_2^\prime\\ e^{i\theta_2}y_2^\prime
\end{bmatrix},
$$
and consequently,
$$
\|h_1\|^2=\|h_2\|^2+\|g_2^\prime\|^2+\|y_2^\prime\|^2+\sigma_2^2\|y_2^\prime\|^2.
$$
Therefore, $y_2^\prime=0$, and then
\[
\begin{bmatrix} g_2^\prime \\y_2^\prime
\end{bmatrix} \in \clm_2\cap(H^2_{E_2}(\T)\oplus\{0\})=\Phi_2H^2_{E_2}(\T)\oplus\{0\},
\]
forcing $g_2^\prime = 0$ as well. Therefore,
$$
U(\Phi_1H^2_{E_1}(\T)\oplus\{0\})\subseteq \Phi_2 H^2_{E_2}(\T)\oplus\{0\}.
$$
Using exactly similar lines of argument for $U^*$, we get
$$
U^*(\Phi_2H^2_{E_2}(\T)\oplus\{0\}) \subseteq \Phi_1 H^2_{E_1}(\T)\oplus\{0\},
$$
and hence $U$ is a unitary mapping between $\Phi_1H^2_{E_1}(\T)\oplus\{0\}$ and $\Phi_2H^2_{E_2}(\T)\oplus\{0\}$, such that
$$
UB^E_{\sigma_1, e^{i\theta_1}}\big|_{\Phi_1H^2_{E_1}(\T)\oplus\{0\}}=B^E_{\sigma_2, e^{i\theta_2}}\big|_{\Phi_2H^2_{E_2}(\T)\oplus\{0\}}U.
$$
Looking at the \textsf{Step 2}, where $\clm_1$ and $\clm_2$ both are taken to be Type I, it readily follows that for any $x_1\in E_1$, $U\begin{bmatrix}
    \Phi_1x_1 \\0
\end{bmatrix}=\begin{bmatrix}
    \Phi_2x_2 \\0
\end{bmatrix}$
for some $x_2\in E_2$, and conversely, for any $x_2\in E_2$ there exists $x_1\in E_1$ such that the above equality holds. This now guarantees the existence of a unitary map $U_E: E_1\raro E_2$, defined as follows:
$$
U_E(x_1)=x_2,
$$
whenever
\[
U\begin{bmatrix}\Phi_1x_1 \\0
\end{bmatrix}=\begin{bmatrix}\Phi_2x_2 \\0
\end{bmatrix},
\]
for $x_1\in E_1, x_2\in E_2$. For a given $\begin{bmatrix}
g \\ y
\end{bmatrix}\in \langle \clg_1 \rangle$, suppose now
\[
U \begin{bmatrix} g \\ y\end{bmatrix} = \begin{bmatrix} \Phi_2 h^\prime \\ 0\end{bmatrix}+\begin{bmatrix} g^\prime \\ y^\prime\end{bmatrix},
\]
for some $h^\prime\in H^2_{E_2}(\T)$ and $\begin{bmatrix}
    g^\prime\\ y^\prime
\end{bmatrix}\in \langle \clg_2 \rangle$. This implies
\begin{equation}\label{Brown_5}
\|g\|^2+\|y\|^2=\|h^\prime\|^2+\|g^\prime\|^2+\|y^\prime\|^2.
\end{equation}
Now there exists $h\in H^2_{E_1}(\T)$ such that
$U\begin{bmatrix}
    \Phi_1h\\0
\end{bmatrix}=\begin{bmatrix}
    \Phi_2 h^\prime\\0
\end{bmatrix}.$
As a result,
\[
U\begin{bmatrix} g \\ y\end{bmatrix}=U\begin{bmatrix} \Phi_1 h \\ 0\end{bmatrix}+\begin{bmatrix} g^\prime \\ y^\prime\end{bmatrix},
\]
that is,
\[
U\begin{bmatrix} g - \Phi_1 h \\ y\end{bmatrix} = \begin{bmatrix} g^\prime \\ y^\prime\end{bmatrix}.
\]
Consequently,
\[
\|g\|^2+\|h\|^2+\|y\|^2 =\left\|U\begin{bmatrix} g-\Phi_1h\\ y\end{bmatrix}\right\|^2 = \left\|\begin{bmatrix} g^\prime \\ y^\prime\end{bmatrix}\right\|^2=\|g^\prime\|^2+\|y^\prime\|^2,
\]
which, combined with (\ref{Brown_5}) yields $h=h^\prime=0$. In other words, $U(\langle \clg_1 \rangle)\subseteq \langle \clg_2 \rangle$. Imitating this argument line by line with $U$ replaced by $U^*$, it immediately follows that $U^*(\langle \clg_2 \rangle)\subseteq \langle \clg_1 \rangle$. We therefore conclude that there exists a unitary map $U_\clg=U\big|_{\langle \clg_1 \rangle}$ between $\langle \clg_1 \rangle$ and $\langle \clg_2 \rangle$. Now consider any $\begin{bmatrix}
    g_j \\ y_j
\end{bmatrix}\in \langle \clg_j \rangle$, $j=1, 2$, such that $U\begin{bmatrix}
    g_1\\y_1
\end{bmatrix}=\begin{bmatrix}
    g_2\\ y_2
\end{bmatrix}$, and recall that
\begin{equation}\label{Brown_eq}
z g_j + \sigma_jy_j = e^{i\theta_j} g_j + \Phi_jx^\prime_j,
\end{equation}
for a unique $x_j^\prime\in E_j$. Then
\[
\left\langle B^E_{\sigma_1, e^{i\theta_1}}\begin{bmatrix} g_1 \\ y_1\end{bmatrix}, U^*\begin{bmatrix} g_2 \\ y_2\end{bmatrix}\right\rangle= \left\langle B^E_{\sigma_2, e^{i\theta_2}}U\begin{bmatrix} g_1 \\ y_1\end{bmatrix}, \begin{bmatrix} g_2 \\ y_2\end{bmatrix}\right\rangle.
\]
The above identity implies
\[
\left\langle \begin{bmatrix} zg_1+\sigma_1y_1\\ e^{i\theta_1}y_1\end{bmatrix}, \begin{bmatrix} g_1 \\ y_1\end{bmatrix}\right\rangle=\left\langle \begin{bmatrix} zg_2+\sigma_2y_2\\ e^{i\theta_2}y_2\end{bmatrix}, \begin{bmatrix} g_2 \\ y_2\end{bmatrix}\right\rangle,
\]
or, equivalently,
\[
\left\langle e^{i\theta_1}\begin{bmatrix} g_1\\ y_1\end{bmatrix}+\begin{bmatrix}\Phi_1x_1^\prime \\ 0\end{bmatrix}, \begin{bmatrix} g_1\\ y_1\end{bmatrix}\right\rangle = \left\langle  e^{i\theta_2}\begin{bmatrix} g_2\\ y_2\end{bmatrix}+\begin{bmatrix}\Phi_2x_2^\prime \\ 0\end{bmatrix}, \begin{bmatrix} g_2\\ y_2\end{bmatrix} \right\rangle,
\]
and so
\[
e^{i\theta_1} \left\|\begin{bmatrix} g_1\\ y_1\end{bmatrix}\right\|^2 = e^{i\theta_1} \left\|U\begin{bmatrix} g_1\\ y_1\end{bmatrix}\right\|^2=e^{i\theta_2}\left\|\begin{bmatrix} g_2\\ y_2\end{bmatrix}\right\|^2.
\]
It follows that $e^{i\theta_1}=e^{i\theta_2}$. Since $\theta_1, \theta_2\in[0, 2\pi)$, we conclude that $\theta_1=\theta_2$. Let us set again $\theta=\theta_1=\theta_2$. Using this information and $(\ref{Brown_eq})$, we finally observe that
\begin{align*}
U\begin{bmatrix}
    \Phi_1x_1^\prime\\0
\end{bmatrix}+e^{i\theta}U\begin{bmatrix}
    g_1 \\ y_1
\end{bmatrix} = U\begin{bmatrix}
    zg_1+\sigma_1 y_1\\e^{i\theta}y_1
\end{bmatrix} = UB^E_{\sigma_1, e^{i\theta}}\begin{bmatrix}g_1\\ y_1\end{bmatrix} = B^E_{\sigma_2, e^{i\theta}}\begin{bmatrix}
    g_2 \\ y_2
\end{bmatrix}.
\end{align*}
As
\begin{align*}
B^E_{\sigma_2, e^{i\theta}}\begin{bmatrix}
    g_2 \\ y_2
\end{bmatrix} = \begin{bmatrix}
    zg_2+\sigma_2 y_2\\e^{i\theta}y_2
\end{bmatrix} = \begin{bmatrix}
    \Phi_2x_2^\prime\\0
\end{bmatrix}+e^{i\theta}\begin{bmatrix}
    g_2 \\ y_2
\end{bmatrix},
\end{align*}
it follows that $U\begin{bmatrix}
    \Phi_1x_1^\prime\\0
\end{bmatrix}=\begin{bmatrix}\Phi_2x_2^\prime\\0
\end{bmatrix}$. As a consequence, $U_E x_1^\prime=x_2^\prime$. This completes the proof.
\end{proof}

In the following section, we illustrate this theorem in the case $E = \mathbb{C}$.

\section{The scalar case}\label{sec: scalar}

The purpose of this section is to recover the representations of invariant subspaces of Brownian shifts on $H^2(\T)$, as obtained by Agler and Stankus (see \cite[pp. 21-24]{Agler-Stankus}). More specifically, in the particular case of $E = \mathbb{C}$, by using Theorem \ref{th1-br} and combining Theorems \ref{th2-br} and \ref{thm: type II conv} with Remark \ref{rem: boundary}, we retrieve the representations of invariant subspaces of Agler and Stankus:

\begin{theorem}\label{th-scalar Agler}
Let $\clm$ be a nonzero closed subspace of $H^2(\T) \oplus \mathbb{C}$. Then $\clm$ is invariant under $\br$ if and only if it admits one of the following representations:
\[
\clm = \vp H^2(\T) \oplus \{0\},
\]
for some inner function $\vp \in H^\infty(\T)$, or
\[
\clm = \mathbb{C}\begin{bmatrix} g\\ 1\end{bmatrix} \oplus \left(\psi H^2(\T)\oplus\{0\}\right),
\]
where $\psi \in H^\infty(\T)$ is an inner function with the condition that $\psi(e^{i\theta})$ exists, and
\[
g = \sigma \left(\frac{\overline{\psi(e^{i\theta})} \psi - 1}{z-e^{i\theta}}\right)\in H^2(\T).
\]	
\end{theorem}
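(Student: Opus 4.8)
The plan is to specialize the vector-valued classification (Theorems \ref{th1-br}, \ref{th2-br}, and \ref{thm: type II conv}) to $E=\mathbb{C}$ and to extract the explicit one-dimensional description of the defect space from Remark \ref{rem: boundary}. The first observation is that in the scalar setting every inner multiplier collapses to a scalar inner function: if $\Phi\in H^\infty_{\clb(E_1,\mathbb{C})}(\T)$ is inner, then $\Phi(z)$ is an isometry from $E_1$ into the one-dimensional space $\mathbb{C}$ for almost every $z\in\T$, which forces $\dim E_1=1$. Thus $E_1=\mathbb{C}$ and $\Phi=\vp$ is an ordinary scalar inner function. Applying Theorem \ref{th1-br} with $E=\mathbb{C}$ then yields the Type I representation $\clm=\vp H^2(\T)\oplus\{0\}$ directly, and the same identification $E_2=\mathbb{C}$, $\Phi=\psi$ will be in force throughout the Type II analysis.

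For the forward direction in the Type II case I would invoke Theorem \ref{th2-br}: there is a scalar inner $\psi$ with $\clm=\cld_\clm\oplus(\psi H^2(\T)\oplus\{0\})$ and $\cld_\clm=\langle\clg\rangle\subseteq\clg_\psi$. The heart of the matter is to show that $\clg_\psi$ is one-dimensional. Fix a nonzero $\begin{bmatrix} g\\ y\end{bmatrix}\in\clg_\psi$; by Lemma \ref{lemma: G Phi} and the ``moreover'' clause of Theorem \ref{th2-br}, we have $y\neq 0$ and a unique scalar $x$ with $|x|=\sigma|y|$ satisfying $g=\frac{\psi x-\sigma y}{z-e^{i\theta}}$. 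Remark \ref{rem: boundary} supplies the boundary relation $\psi(e^{i\theta})x=\sigma y$, where $\psi(e^{i\theta})$ is the radial limit. Taking moduli gives $|\psi(e^{i\theta})|\,|x|=\sigma|y|$, and combined with $|x|=\sigma|y|$ and $y\neq 0$ this forces $|\psi(e^{i\theta})|=1$; hence $\psi(e^{i\theta})$ is invertible and $x=\sigma\overline{\psi(e^{i\theta})}\,y$. Substituting back gives
\[
g=\sigma y\,\frac{\overline{\psi(e^{i\theta})}\psi-1}{z-e^{i\theta}}=y\,g_0,\qquad g_0:=\sigma\,\frac{\overline{\psi(e^{i\theta})}\psi-1}{z-e^{i\theta}},
\]
so that $\begin{bmatrix} g\\ y\end{bmatrix}=y\begin{bmatrix} g_0\\ 1\end{bmatrix}$. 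Since $\cld_\clm\neq\{0\}$, such an element exists with $g_0=g/y\in H^2(\T)$; thus $\clg_\psi=\mathbb{C}\begin{bmatrix} g_0\\ 1\end{bmatrix}$ is one-dimensional, any nonzero $\clg\subseteq\clg_\psi$ spans all of it, and $\clm=\mathbb{C}\begin{bmatrix} g_0\\ 1\end{bmatrix}\oplus(\psi H^2(\T)\oplus\{0\})$, which is the asserted Type II form.

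For the converse I would argue that the two displayed families are genuinely invariant. The Type I case is immediate from the converse half of Theorem \ref{th1-br}. For the Type II case, given a scalar inner $\psi$ whose radial limit $\psi(e^{i\theta})$ exists and with $g=\sigma\frac{\overline{\psi(e^{i\theta})}\psi-1}{z-e^{i\theta}}\in H^2(\T)$, it suffices by Theorem \ref{thm: type II conv} to check that $\begin{bmatrix} g\\ 1\end{bmatrix}\in\clg_\psi$: taking $y=1$ and $x=\sigma\overline{\psi(e^{i\theta})}$, we have $\frac{\psi x-\sigma y}{z-e^{i\theta}}=g\in H^2(\T)$, so membership holds and Theorem \ref{thm: type II conv} produces the invariant subspace $\mathbb{C}\begin{bmatrix} g\\ 1\end{bmatrix}\oplus(\psi H^2(\T)\oplus\{0\})$. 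I expect the only genuinely delicate point to be the passage through Remark \ref{rem: boundary}: one must take care that $\psi(e^{i\theta})$ is understood as the radial limit and that invertibility $|\psi(e^{i\theta})|=1$ is precisely what licenses the clean substitution $x=\sigma\overline{\psi(e^{i\theta})}\,y$ collapsing $\clg_\psi$ to a single line. Everything else is a routine specialization of the already-established vector-valued theorems.
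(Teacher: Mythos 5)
Your proposal is correct and follows essentially the same route as the paper's own proof: specialize Theorems \ref{th1-br}, \ref{th2-br} and \ref{thm: type II conv} to $E=\mathbb{C}$, use the norm identity $|x|=\sigma|y|$ together with the boundary relation of Remark \ref{rem: boundary} to get $|\psi(e^{i\theta})|=1$ and $x=\sigma\overline{\psi(e^{i\theta})}\,y$, and thereby collapse the defect space to the line $\mathbb{C}\begin{bmatrix} g\\ 1\end{bmatrix}$. The only (harmless) difference is that you argue all of $\clg_\psi$ is one-dimensional rather than just $\cld_\clm$; note that the ``moreover'' clause of Theorem \ref{th2-br} is stated for elements of $\cld_\clm$, but the facts you need ($y\neq 0$, uniqueness of $x$, and $|x|=\sigma|y|$) do extend to all of $\clg_\psi$ by the orthogonality argument in the proof of Lemma \ref{lemma: G Phi}, so this is a cosmetic rather than substantive deviation.
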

\begin{proof}
We start with the proof of ``only if" part. Suppose $\clm\subseteq H^2(\T)\oplus\{0\}$. If $B_{\sigma, e^{i\theta}}(\clm)\subseteq \clm$, then setting $E=\C$ in Theorem \ref{th1-br}, we find that $E_1 = \C$ (note that $\clm \neq \{0\}$), and
\[
\clm=\vp H^2(\T)\oplus\{0\},
\]
for some inner function $\vp \in H^\infty(\T)$. Next, assume that $\clm\nsubseteq H^2(\T)\oplus\{0\}$. Under the assumption $E=\C$, Theorem \ref{th2-br} asserts that
\[
\clm = \cld_\clm \oplus (\psi H^2(\T)\oplus\{0\}),
\]
for some inner  function $\psi \in H^\infty(\T)$, where
\[
\cld_\clm= \clm \ominus (\clm\cap (H^2(\T)\oplus \{0\})) \neq \{0\}.
\]
It now remains to show that any element of $\cld_\clm$ is a scalar multiple of $\begin{bmatrix}
    g \\ 1
\end{bmatrix}$, $g$ as given in the statement of the theorem.
Making use of Theorem \ref{th2-br} and Remark \ref{rem: boundary}, we see that for any nonzero $\begin{bmatrix}
    g_1 \\ \beta
\end{bmatrix}\in \cld_\clm$, there exists unique $\alpha\in \C$ with $|\alpha|=\sigma|\beta|>0$ such that
\begin{equation}\label{rep_g}
g_1=\frac{\psi \alpha-\sigma\beta }{z-e^{i\theta}},
\end{equation}
and
\[
\lim_{r\to 1-}\psi(re^{i\theta})\alpha = \sigma\beta.
\]
Since $|\alpha|=\sigma|\beta|> 0$, $|\psi(e^{i\theta})|=1$. As a result, $$\alpha = \sigma \overline{\psi(e^{i\theta})}\beta.$$ Using this value of $\alpha$ in \eqref{rep_g}, we deduce that
\begin{equation*}
g_1=\beta\sigma\left( \frac{\overline{\psi(e^{i\theta})}\psi -1 }{z-e^{i\theta}}\right),
\end{equation*}
which means $\begin{bmatrix}
    g_1\\ \beta
\end{bmatrix}=\beta\begin{bmatrix}
    g\\ 1
\end{bmatrix}$.
The ``if'' part follows trivially from Theorems \ref{th1-br} and \ref{thm: type II conv}.
\end{proof}

In the scalar case, the unitary equivalence of invariant subspaces established in Theorem \ref{thm: unit equiv} takes the following form:

\begin{theorem}\label{complex-br}
Fix angles $\theta_1, \theta_2 \in [0, 2 \pi)$ and covariances $\sigma_1, \sigma_2 > 0$. Let $\clm_1$ and $\clm_2$ be nonzero closed invariant subspaces of the Brownian shifts $B_{\sigma_1, e^{i\theta_1}}$ and $B_{\sigma_2, e^{i\theta_2}}$ in $H^2(\T)\oplus\C$, respectively. Then
\[
B_{\sigma_1, e^{i\theta_1}}{\big|_{\clm_1}} \cong B_{\sigma_2, e^{i\theta_2}}{\big|_{\clm_2}},
\]
if and only if any one of the following conditions is true:
\begin{itemize}
\item[(1)]
Both $\clm_1$ and $\clm_2$ are Type I.
\item[(2)]
Both $\clm_1$ and $\clm_2$ are Type II, along with the facts that
\[
\theta_1 = \theta_2,
\]
and
\[
\sigma_2^2(1+\|g_1\|^2)=\sigma_1^2(1+\|g_2\|^2),
\]
where $\clm_j = \mathbb{C}\begin{bmatrix}g_j\\ 1\end{bmatrix} \oplus \left(\varphi_j H^2(\T)\oplus\{0\}\right)$ is the canonical representation of $\clm_j$, and $g_j = \sigma_j\left(\frac{\overline{\vp_j(e^{i\theta_j})} \vp_j - 1}{z - e^{i\theta_j}}\right)$ for $j=1,2$.
\end{itemize}
\end{theorem}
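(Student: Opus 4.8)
The plan is to obtain Theorem \ref{complex-br} as the specialization of Theorem \ref{thm: unit equiv} to $E = \C$, so that the whole argument amounts to translating the two abstract criteria into the concrete scalar data. First I would record two structural simplifications peculiar to $E=\C$. Since a $\clb(E_1,\C)$-valued inner function must take isometric, hence one-dimensional-range, values almost everywhere, every scalar Type I subspace forces $E_1 = \C$; likewise, for Type II the coefficient space is $E_2 = \C$ and, by Theorem \ref{th-scalar Agler}, the defect space $\cld_{\clm_j}$ is the one-dimensional line $\C\begin{bmatrix} g_j \\ 1 \end{bmatrix}$. The Type I / Type II dichotomy (they are never unitarily equivalent) is inherited verbatim from Step 1 of the proof of Theorem \ref{thm: unit equiv}, so I only need to analyze the two ``same type'' cases.

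For condition (1) the reduction is immediate: in Theorem \ref{thm: unit equiv}(1) the only requirement is $\dim E_1 = \dim E_2$, and this holds automatically because $E_1 = E_2 = \C$. Hence, in the scalar setting, any two Type I subspaces yield unitarily equivalent restrictions, which is exactly part (1).

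The substance is part (2), where I would turn the existence-of-compatible-unitaries criterion of Theorem \ref{thm: unit equiv}(2) into the single scalar identity. Write $n_j = (1 + \|g_j\|^2)^{1/2}$ for the norm of the generator $\begin{bmatrix} g_j \\ 1 \end{bmatrix}$ of $\cld_{\clm_j}$. Because both $\langle \clg_j \rangle$ and $E_j = \C$ are one-dimensional, a unitary $U_\clg: \langle \clg_1 \rangle \raro \langle \clg_2 \rangle$ is completely determined by a scalar $c$ sending the generator to $c\begin{bmatrix} g_2 \\ 1 \end{bmatrix}$ with $|c| = n_1/n_2$ (norm preservation), and $U_E$ is multiplication by a unimodular $\gamma$. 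The key input is Theorem \ref{th2-br}: for the generator with $y_j = 1$ the associated vector $x_j'$ satisfies $|x_j'| = \sigma_j$, and $x_j'$ scales linearly under scalar multiples, so the image $c\begin{bmatrix} g_2 \\ 1 \end{bmatrix}$ carries the $x$-value $c\,x_2'$. The compatibility relation $U_E x_1' = c\,x_2'$ then reads $\gamma x_1' = c\,x_2'$; taking moduli and using $|x_1'| = \sigma_1$, $|x_2'| = \sigma_2$, $|c| = n_1/n_2$ yields $\sigma_1 n_2 = \sigma_2 n_1$, that is, $\sigma_2^2(1 + \|g_1\|^2) = \sigma_1^2(1 + \|g_2\|^2)$; the equality $\theta_1 = \theta_2$ is carried over directly from Theorem \ref{thm: unit equiv}(2). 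Conversely, once this norm identity holds, the moduli of $\gamma x_1'$ and $c\,x_2'$ match for $|c| = n_1/n_2$, so I can solve for a unimodular $\gamma$ and thereby exhibit the required pair $(U_\clg, U_E)$, recovering condition (2) of Theorem \ref{thm: unit equiv}.

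The only real obstacle is the bookkeeping in that last step: keeping straight how the unique vector $x_j'$ attached to an element of the defect line rescales under the one-dimensional unitaries, and recognizing that the norm-matching in the compatibility equation is precisely the stated identity. Everything else is a direct transcription of Theorem \ref{thm: unit equiv}, with the magnitudes $|x_j'| = \sigma_j$ supplied by Theorem \ref{th2-br} doing the essential work.
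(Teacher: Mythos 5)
Your proposal is correct and follows essentially the same route as the paper: specialize Theorem \ref{thm: unit equiv} to $E=\C$, note that the Type I case is automatic since $E_1=E_2=\C$, and in the Type II case extract $|c|=\sigma_1/\sigma_2$ from the compatibility condition $U_E x_1'=c\,x_2'$ (using $|x_j'|=\sigma_j$) together with $|c|=(1+\|g_1\|^2)^{1/2}/(1+\|g_2\|^2)^{1/2}$ from unitarity of $U_\clg$, which combine to the stated identity; the converse is handled identically in both arguments by exhibiting the pair $(U_\clg,U_E)$ explicitly.
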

\begin{proof}
It is clear that we can now assume $E_1=E_2=\C$ and $\clg_1=\C\begin{bmatrix}
    g_1 \\1
\end{bmatrix}$, $\clg_2=\C\begin{bmatrix}
    g_2\\ 1
\end{bmatrix}$ in the statement of Theorem \ref{thm: unit equiv}. All we have to verify is that condition (2) of Theorem \ref{thm: unit equiv} becomes equivalent to condition (2) of this theorem for $E=\C$. In both cases $\theta_1=\theta_2$ is common in condition (2), so we only need to concentrate on the rest. Suppose
$$U_\clg\begin{bmatrix}
    g_1\\1
\end{bmatrix}=\alpha\begin{bmatrix}
    g_2\\1
\end{bmatrix}$$ for some $\alpha\in\C$. It is evident that
$1+\|g_1\|^2=|\alpha|^2(1+\|g_2\|^2)$, and at the same time, according to condition (2) of Theorem \ref{thm: unit equiv}
$$U_E\left(\sigma_1\overline{\vp_1(e^{i\theta})}\right)=\alpha\sigma_2\overline{\vp_2(e^{i\theta})},$$ which gives $|\alpha|=\sigma_1/\sigma_2$. As a result, \begin{equation}\label{Newcor}\sigma_2^2(1+\|g_1\|^2)=\sigma_1^2(1+\|g_2\|^2).
\end{equation}
Conversely, if we start by assuming (\ref{Newcor}), then it is immediately seen that $$U_\clg\begin{bmatrix}
    g_1\\1
\end{bmatrix}=\frac{\sigma_1}{\sigma_2}\begin{bmatrix}
    g_2\\1
\end{bmatrix},$$
and $$
U_E\left(\overline{\vp_1(e^{i\theta})}\right)=\overline{\vp_2(e^{i\theta})}
$$
define surjective isometries $\langle\clg_1\rangle\raro \langle\clg_2\rangle$ and $E_1\raro E_2$, respectively, and therefore, they are unitaries. Our proof is therefore done.
\end{proof}

This result was previously obtained in \cite[Theorem 2.1]{DDS}.

\section{Reducing subspaces}\label{sec: reducing}

Given a bounded linear operator $A$ on a Hilbert space $\clh$, a closed subspace $\cls \subseteq \clh$ is said to be \textit{reducing} for $A$ (or $A$-reducing) if $\mathcal{S}$ is invariant under both $A$ and $A^*$. Our goal here is to classify all reducing subspaces of Brownian shifts.

Recall that a closed subspace $\clm$ of $H^2_E(\T)$ is reducing for $S_E$ if and only if there exists a closed subspace $F \subseteq E$ such that $\clm = H^2_F(\T)$ (cf. \cite[Theorem 3.22]{Radj-Rosen}). For the question of reducing subspaces of a Brownian shift $\br^E$ on $H_E^2(\T)\oplus E$, the answer is as follows:

\begin{theorem}
Let $\clm$ be a closed subspace of $H_E^2(\T)\oplus E$. Then $\clm$ is a reducing subspace for $\br^E$ if and only if there exists a closed subspace $G$ of $E$ such that
\[
\clm = H_{G}^2(\T)\oplus G.
\]
\end{theorem}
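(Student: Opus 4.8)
The plan is to recast the reducing condition in terms of the orthogonal projection $P = P_\clm$ onto $\clm$: recall that $\clm$ reduces $\br^E$ exactly when $P\br^E = \br^E P$ (which then automatically gives $P(\br^E)^* = (\br^E)^*P$). The \emph{if} direction is a direct verification. Computing the adjoint $(\br^E)^* = \begin{bmatrix} S_E^* & 0 \\ \sigma i_E^* & e^{-i\theta}I_E \end{bmatrix}$, where $i_E^* f = f(0)$ is the constant Fourier coefficient of $f$, one checks that both $\br^E$ and $(\br^E)^*$ map $H^2_G(\T)\oplus G$ into itself when $G\subseteq E$ is closed: for $f\in H^2_G(\T)$ and $x\in G$ the entries $zf+\sigma x$ and $S_E^*f$ stay $G$-valued, while $e^{\pm i\theta}x$ and $\sigma f(0)$ remain in $G$. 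Hence $H^2_G(\T)\oplus G$ is reducing.

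For the \emph{only if} direction, I would write $P$ as an operator matrix $P=\begin{bmatrix} P_{11} & P_{12} \\ P_{21} & P_{22}\end{bmatrix}$ relative to $\he\oplus E$ and expand $P\br^E = \br^E P$ entry by entry. The $(2,2)$ and $(2,1)$ entries yield $P_{21}i_E = 0$ and $P_{21}S_E = e^{i\theta}P_{21}$. Since every $z^n a$ with $a\in E$ equals $S_E^n i_E a$, we get $P_{21}(z^n a) = e^{in\theta}P_{21}i_E a = 0$, so $P_{21}$ vanishes on a dense set and hence $P_{21}=0$; self-adjointness of $P$ then forces $P_{12}=P_{21}^*=0$. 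Thus $P$ is block-diagonal, and $P^2=P=P^*$ makes $P_{11}$ and $P_{22}$ orthogonal projections.

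The surviving $(1,1)$ and $(1,2)$ entries now read $P_{11}S_E = S_E P_{11}$ and $P_{11}i_E = i_E P_{22}$. The first says $\mathrm{ran}(P_{11})$ is reducing for $S_E$, so by the cited structure theorem for reducing subspaces of $S_E$ there is a closed subspace $G\subseteq E$ with $\mathrm{ran}(P_{11})=H^2_G(\T)$; concretely $P_{11}$ acts coefficient-wise as the projection $P_G$ of $E$ onto $G$, whence $P_{11}i_E = i_E P_G$. Comparing with the second relation gives $i_E P_G = i_E P_{22}$, and injectivity of $i_E$ forces $P_{22}=P_G$, i.e. $\mathrm{ran}(P_{22})=G$. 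Therefore $\clm = \mathrm{ran}(P) = H^2_G(\T)\oplus G$, as claimed.

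The step demanding the most care is the passage through the off-diagonal blocks: establishing $P_{21}=0$ is what decouples the two summands and is the real crux, resting on the interplay of $i_E$, $S_E$, and the eigenvalue-type relation $P_{21}S_E=e^{i\theta}P_{21}$. Once block-diagonality is in hand, the identification $P_{22}=P_G$ through $P_{11}i_E=i_E P_{22}$ is the only place where the specific $\sigma i_E$ coupling of the Brownian shift genuinely enters; everything else is classical reducing-subspace theory for $S_E$.
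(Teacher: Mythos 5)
Your proof is correct, and it takes a genuinely different route from the paper's. You encode the reducing condition in the commutation relation $P\br^E=\br^E P$ for the orthogonal projection $P$ onto $\clm$, expand it as a $2\times 2$ operator matrix, and use the $(2,1)$ and $(2,2)$ entries ($P_{21}S_E=e^{i\theta}P_{21}$ and $P_{21}i_E=0$, hence $P_{21}(z^na)=e^{in\theta}P_{21}i_Ea=0$ on a total set) to kill the off-diagonal blocks; the surviving relations $P_{11}S_E=S_EP_{11}$ and $P_{11}i_E=i_EP_{22}$ then hand the problem to the classical description of $S_E$-reducing subspaces and pin down $P_{22}=P_G$ via injectivity of $i_E$. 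The paper instead argues element-wise: starting from $\begin{bmatrix} f\\ x\end{bmatrix}\in\clm$ it applies $(\br^E)^*\br^E$ to extract $\begin{bmatrix}0\\ x\end{bmatrix}$, $\begin{bmatrix}x\\ 0\end{bmatrix}$, $\begin{bmatrix}f\\ 0\end{bmatrix}$, then runs an induction with $(\br^E)^*$ to place every Fourier coefficient $\hat f(n)$ in $\clm$, and builds $G$ directly as the closed span of these vectors, so it never actually invokes the cited theorem on reducing subspaces of $S_E$. Your argument is shorter and more structural, at the cost of leaning on that classical theorem and on the (routine but worth stating) identification of the projection onto $H^2_G(\T)$ as the coefficient-wise action of $P_G$; the paper's is self-contained and makes visible exactly which vectors the reducing condition forces into $\clm$. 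Both are complete proofs.
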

\begin{proof}
Given the adjoint operator $\left(B^E_{\sigma, e^{i\theta}}\right)^* =\begin{bmatrix}S_E^* & 0\\ \sigma i^*_E & e^{-i\theta}I_E \end{bmatrix}$, the sufficiency is straightforward. Suppose $\clm$ is a reducing subspace for $\br^E$. Let $\begin{bmatrix}f \\ x \end{bmatrix} \in \clm$. Then
\[
\begin{bmatrix}
f\\\sigma^2 x + x
\end{bmatrix} = \left(\br^E\right)^*\br^E \begin{bmatrix}
f\\ x \end{bmatrix} \in \clm,
\]
and hence
\[
\begin{bmatrix}
0\\ \sigma^2 x
\end{bmatrix} = \begin{bmatrix} f\\ \sigma^2 x + x
\end{bmatrix} -  \begin{bmatrix}
f\\
x
\end{bmatrix} \in \clm.
\]
As $\sigma>0$, this implies $\begin{bmatrix} 0\\ x\end{bmatrix}\in \clm$. On the other hand, since $\br^E \begin{bmatrix} 0\\ x\end{bmatrix} =  \begin{bmatrix}\sigma x \\e^{i\theta}x\end{bmatrix} \in \clm$, it follows that
\[
\begin{bmatrix}\sigma x\\ e^{i\theta}x\end{bmatrix} - e^{i\theta}\begin{bmatrix} 0\\ x\end{bmatrix} = \sigma  \begin{bmatrix} x \\ 0\end{bmatrix} \in \clm,
\]
and hence, $\begin{bmatrix} x\\ 0\end{bmatrix} \in \clm$, and also, $\begin{bmatrix}f \\ 0 \end{bmatrix} = \begin{bmatrix}f \\ x \end{bmatrix} -\begin{bmatrix} 0\\ x\end{bmatrix} \in \clm$. Therefore, for each $\begin{bmatrix}f \\ x \end{bmatrix} \in \clm$, we have
\begin{equation}\label{reducing_vBS}
\Big\{\begin{bmatrix} x\\ 0\end{bmatrix}, \begin{bmatrix}  0\\ x\end{bmatrix}, \begin{bmatrix}f \\ 0 \end{bmatrix} \Big\} \subseteq \clm.
\end{equation}
Write $f = \sum_{n=0}^\infty \hat{f}(n) z^n$, where $\hat{f}(n) \in E$ for all $n \geq 0$. Then
\[
\left(\br^E\right)^*\begin{bmatrix}f \\ 0 \end{bmatrix} = \begin{bmatrix}S_E^*f \\ \sigma \hat{f}(0) \end{bmatrix} \in \clm,
\]
and hence (\ref{reducing_vBS}) implies $\Big\{\begin{bmatrix}\hat{f}(0) \\ 0 \end{bmatrix}, \begin{bmatrix}0 \\\hat{f}(0)\end{bmatrix}, \begin{bmatrix}S_E^*f \\ 0 \end{bmatrix}\Big\} \subseteq \clm$. Assuming that $$\Big\{\begin{bmatrix}\hat{f}(m-1) \\ 0 \end{bmatrix}, \begin{bmatrix}0 \\\hat{f}(m-1)\end{bmatrix}, \begin{bmatrix}{S_E^*}^mf \\ 0 \end{bmatrix}\Big\} \subseteq \clm$$
for any $m\geq 1$, we observe that $\left(\br^E\right)^*\begin{bmatrix}{S_E^*}^mf \\ 0 \end{bmatrix}=\begin{bmatrix}{S_E^*}^{m+1}f \\ \sigma \hat{f}(m)\end{bmatrix}\in\clm$, and again from (\ref{reducing_vBS}), it follows that
$$\Big\{\begin{bmatrix}\hat{f}(m) \\ 0 \end{bmatrix}, \begin{bmatrix}0 \\\hat{f}(m)\end{bmatrix}, \begin{bmatrix}{S_E^*}^{m+1}f \\ 0 \end{bmatrix}\Big\} \subseteq \clm.$$
The principle of mathematical induction now yields
\[
\Big\{\begin{bmatrix}\hat{f}(n) \\ 0 \end{bmatrix}, \begin{bmatrix}0 \\\hat{f}(n)\end{bmatrix}: n \geq 0 \Big\} \subseteq \clm.
\]
Set $G=\langle G_0\rangle$, where
\[
G_0 = \left\{x, \hat{f}(n):  \begin{bmatrix}f \\ x \end{bmatrix} \in \clm, n\geq 0\right\}.
\]
Therefore, $G$ is a closed subspace of $E$. Moreover, we let
\[
G_1 = \left\{\begin{bmatrix} 0\\ x\end{bmatrix}, \begin{bmatrix} 0\\ \hat{f}(n) \end{bmatrix}:  \begin{bmatrix}f \\ x \end{bmatrix} \in \clm, n\geq 0\right\},
\]
and
\[
G_2 = \left\{\begin{bmatrix} x\\ 0 \end{bmatrix}, \begin{bmatrix} \hat{f}(n)\\ 0\end{bmatrix}:  \begin{bmatrix}f \\ x \end{bmatrix} \in \clm, n\geq 0\right\}.
\]
Then $\langle G_1\rangle = \{0\}\oplus G$ and $\langle G_2\rangle = G \oplus \{0\}$. Now $\langle G_2\rangle \subseteq \clm$ implies $H^2_G(\T)\oplus \{0\}\subseteq \clm$. This and $\{0\}\oplus G = \langle G_1\rangle \subseteq \clm$ yields
$$
H^2_G(\T)\oplus G \subseteq \clm.
$$
For the reverse inclusion, we pick $\begin{bmatrix}f \\ x \end{bmatrix} \in \clm$, and assume $\begin{bmatrix}f \\ x \end{bmatrix} \perp (H^2_G(\T)\oplus G)$. In particular, $\begin{bmatrix}f \\ x \end{bmatrix} \perp \{0\} \oplus G$ implies $x = 0$. Similarly, $\begin{bmatrix} \hat{f}(n)\\ 0\end{bmatrix} \perp G \oplus \{0\}$ for all $n \geq 0$ implies that $f = 0$. This proves $H^2_G(\T)\oplus G = \clm$.
\end{proof}

Recall that a bounded linear operator $A$ on $\mathcal{H}$ is irreducible if there is no nontrivial closed subspace of $\clh$ that reduces $A$. The following is now straightforward: $\br$ on $H^2(\T) \oplus \C$ is irreducible for all angle $\theta \in [0, 2\pi)$ and covariance $\sigma > 0$. This was previously observed in \cite[Proposition 4.3]{DDS}.

\section{Examples}\label{sec: examples}

Let $\clm \subseteq H^2_E(\T)$ be a nonzero closed invariant subspace of the unilateral shift operator $S_E$. Then the restriction ${S_E}|_{\clm}$ is itself a shift operator. In other words, up to unitary equivalence, the restriction of a shift operator to an invariant subspace is again a shift operator. Thus, from the perspective of operator theory, invariant subspaces of shift operators do not yield fundamentally new operators. The Brownian shift on $H^2(\T)$ exhibits a similar property: up to unitary equivalence, the restriction of a Brownian shift on $H^2(\T)$ to a Type II invariant subspace yields another Brownian shift on $H^2(\T)$ \cite[Proposition 5.75]{Agler-Stankus} (see also \cite{DDS}).

At the other extreme, the situation is different for the Bergman and Dirichlet shifts on the unit disc. A rigidity result of Richter shows that the restrictions of the Bergman or Dirichlet shifts to their invariant subspaces always produce unitarily non-equivalent operators, that is, each invariant subspace gives rise to a distinct operator \cite{Richter} (also see \cite{Ron, DS}).

In this section, we present two concrete examples that illustrate the nuanced behavior of Brownian shifts. In the first example, we show that the restriction of a Brownian shift to an invariant subspace does not necessarily yield another Brownian shift:

\begin{example}\label{BSonVHS-ex1}
Consider a Brownian shift $\br^E$ acting on $H^2_E(\T) \oplus E$ for some Hilbert space $E$ with $\dim(E)\geq 2$. Pick a nonzero closed subspace $E_1$ of $E$ such that
\[
\dim E_1 \neq \dim E.
\]
By the upper triangular representation of Brownian shifts, it follows that $\he\oplus E_1$ is a Type II invariant subspace for $\br^E$. If possible, suppose there exist $\sigma^\prime>0$, $\theta^\prime\in[0, 2\pi)$ and a Hilbert space $G$ such that
\begin{equation}\label{Ex-eq-2}
\br^E\big|_{\he\oplus E_1}\cong B_{\sigma^\prime, e^{i\theta^\prime}}^G.
\end{equation}
Consider the Brownian shift $B_{\sigma^\prime, e^{i\theta^\prime}}^K$ on $H^2_K(\T) \oplus K$, where
\[
K = E \oplus G.
\]
Clearly, there exists a unitary $U_1: \he\oplus E_1\raro H^2_{E\oplus \{0\}}(\T)\oplus (E_1\oplus \{0\})$ such that
\[
U_1\br^E\big|_{\he\oplus E_1}=\br^K\big|_{H^2_{E\oplus \{0\}}(\T)\oplus (E_1\oplus \{0\})}U_1.
\]
Similarly, there is a unitary $U_2:H^2_G(\T)\oplus G\raro H^2_{ \{0\} \oplus G}(\T)\oplus ( \{0\}\oplus G)$ such that
\[
U_2 B_{\sigma^\prime, e^{i\theta^\prime}}^G= B_{\sigma^\prime, e^{i\theta^\prime}}^K\big|_{H^2_{ \{0\} \oplus G}(\T)\oplus ( \{0\}\oplus G)}U_2.
\]
As a result, \eqref{Ex-eq-2} implies
\begin{equation}\label{Ex-eq-3}
\br^K\big|_{H^2_{E\oplus \{0\}}(\T)\oplus (E_1\oplus \{0\})} \cong B_{\sigma^\prime, e^{i\theta^\prime}}^K\big|_{H^2_{ \{0\} \oplus G}(\T)\oplus ( \{0\}\oplus G)}.
\end{equation}
On the other hand, observe that
\[
H^2_{E\oplus \{0\}}(\T)\oplus (E_1\oplus \{0\})=\left(H^2_{E\oplus \{0\}}(\T)\oplus \{0\}\right)\oplus\left(\{0\}\oplus(E_1\oplus \{0\})\right),
\]
and given any element $\begin{bmatrix} 0\\e_1 \end{bmatrix}$, with $e_1\in E_1\oplus\{0\}$, we have
\[
0=\frac{\sigma e_1-\sigma e_1}{z-e^{i\theta}}.
\]
Therefore, it becomes apparent that the subspace $H^2_{E\oplus \{0\}}(\T)\oplus (E_1\oplus \{0\})$ of $H^2_K(\T)\oplus K$ is a  Type II invariant subspace for $\br^K$, and so will be
$$
H^2_{ \{0\} \oplus G}(\T)\oplus ( \{0\}\oplus G)
=\left(H^2_{ \{0\} \oplus G}(\T)\oplus\{0\}\right)\oplus\left(\{0\}\oplus ( \{0\}\oplus G)\right)
$$
for $B_{\sigma^\prime, e^{i\theta^\prime}}^K$. However, in view of \eqref{Ex-eq-3}, condition (2) of Theorem \ref{thm: unit equiv} guarantees the existence of unitary operators $U^\prime_E$ and $U^\prime_{E_1}$ such that
\[
U^\prime_E(E\oplus \{0\})=\{0\}\oplus G
\]
and
\[
U^\prime_{E_1}(\{0\}\oplus (E_1\oplus \{0\}))=\{0\}\oplus (\{0\}\oplus G).
\]
Clearly, these two unitary operators $U^\prime_E$ and $U^\prime_{E_1}$ induce two other unitary operators $U_E$ and $U_{E_1}$ such that
\[
U_E(E)= G \text{ and } U_{E_1}(E_1)=G.
\]
Hence, $U_{E_1}^*U_E$ defines a unitary operator between $E$ and $E_1$, leading to the conclusion:
\[
\dim E = \dim E_1,
\]
which is contrary to our assumption. Therefore, the operator $\br^E\big|_{\he\oplus E_1}$ is never unitarily equivalent to any other Brownian shift.
\end{example}

In the following example, we show that the restriction of a Brownian shift to an invariant subspace may result in operator that is unitarily equivalent to another Brownian shift.

\begin{example}\label{BSonVHS-ex2}
Consider the Brownian shift $B_{\sigma, 1}^{\C^2}$ on $H^2_{\C^2}(\T)\oplus\C^2$. We use the row-vector notation $\begin{bmatrix}f_1 & f_2\end{bmatrix}$, where $f_1, f_2\in H^2(\T)$, to denote an element of $H^2_{\C^2}(\T)$. Note that, in particular, either $f_1$ or $f_2$ may be a constant in $\mathbb{C}$. However, for $\begin{bmatrix}
  0 &0
\end{bmatrix}$, we will simply write $0$ when there is no chance of confusion. Now, we begin by considering the one-dimensional subspace of $\C^2$ as
\[
E_1=\langle[1, 1]\rangle.
\]
Also, consider the inner function $\Phi\in H^\infty_{\clb(E_1, \C^2)}(\T)$, defined by $\Phi(z)= \Phi_1(z)\big|_{E_1}$, where 
\[
\Phi_1(z)=\begin{bmatrix}
z &0\\0& 1
\end{bmatrix}.
\]
Note that $\Phi_1 \in H^\infty_{\clb(\C^2)}(\T)$ is an inner function. Since we can write $\begin{bmatrix}\sigma & 0\end{bmatrix}$ as
$$
\begin{bmatrix}\sigma & 0\end{bmatrix} = \frac{\Phi\begin{bmatrix}\sigma & \sigma\end{bmatrix}-\sigma\begin{bmatrix}1 & 1\end{bmatrix}}{z-1}\in H^2_{\C^2}(\T),
$$
by recalling the definition of $\mathcal{G}_\Phi$ from \eqref{eqn: G Phi}, it follows that
$$
\mathbb{g}=
\begin{bmatrix}
  \begin{bmatrix}\sigma & 0\end{bmatrix}\\\\
  \begin{bmatrix}1 & 1\end{bmatrix}
\end{bmatrix}
$$
is a member of $\mathcal{G}_\Phi$. Therefore, Theorem \ref{thm: type II conv} ensures that
$$
\clm=(\Phi H^2_{E_1}(\T)\oplus\{0\})\oplus\langle \mathbb{g}\rangle,
$$
is a Type II invariant subspace of $B_{\sigma, 1}^{\C^2}$. At this point, we turn our attention to the Brownian shift
\[
B_{\sigma^\prime, 1}^{\C\oplus\{0\}}\in\clb\left(H^2_{\C\oplus \{0\}}(\T)\oplus\left(\C\oplus\{0\}\right)\right),
\]
for some $\sigma^\prime > 0$. We claim that
\begin{equation}\label{Ex-eq-1}
B_{\sigma, 1}^{\C^2}\big|_\clm\cong B_{\sigma^\prime, 1}^{\C\oplus\{0\}}=B_{\sigma^\prime, 1}^{\C^2}\big|_{H^2_{\C\oplus \{0\}}(\T)\oplus\left(\C\oplus\{0\}\right)},
\end{equation}
where
\[
\sigma^\prime=\frac{\sigma\sqrt{2}}{\sqrt{2+\sigma^2}}>0.
\]
It is evident that $H^2_{\C\oplus \{0\}}(\T)\oplus\left(\C\oplus\{0\}\right)$ is a Type II invariant subspace of $B_{\sigma^\prime, 1}^{\C^2}\in\clb(H^2_{\C^2}(\T)\oplus\C^2)$, if we recognize that
$$
H^2_{\C\oplus \{0\}}(\T)\oplus\left(\C\oplus\{0\}\right)
=\left(H^2_{\left\langle\begin{bmatrix}
    1& 0
\end{bmatrix}\right\rangle}(\T)\oplus\{0\}\right)\oplus\C\begin{bmatrix}
    \begin{bmatrix}
        0 & 0
    \end{bmatrix}\\\\
    \begin{bmatrix}
        1 & 0
    \end{bmatrix}
\end{bmatrix},
$$
and that
$$
\begin{bmatrix}0 & 0\end{bmatrix}=\frac{\begin{bmatrix}\sigma^\prime & 0\end{bmatrix}-\sigma^\prime\begin{bmatrix}1 & 0\end{bmatrix}}{z-1}.
$$
With all the above information, we are now able to define a unitary $U_\mathbb{g}$ by
$$
U_\mathbb{g}(\mathbb{g})=\sqrt{2+\sigma^2}\begin{bmatrix}
    \begin{bmatrix}
        0 & 0
    \end{bmatrix}\\\\
    \begin{bmatrix}
        1 & 0
    \end{bmatrix}
    \end{bmatrix},
$$
and another unitary $U_{E_1}$ by
$$
U_{E_1}\left(\begin{bmatrix}
    \sigma& \sigma
\end{bmatrix}\right)
=\sqrt{2+\sigma^2}\begin{bmatrix}
    \sigma^\prime & 0
\end{bmatrix}.
$$
The condition (2) of Theorem \ref{thm: unit equiv} now guarantees that (\ref{Ex-eq-1}) is true.
\end{example}

In light of the preceding two examples, it is now clear that Brownian shifts have distinct characteristics that are not commonly observed in shifts on Hilbert function spaces.

We conclude this paper with a remark: Let $T$ be a contraction on a Hilbert space $\clh$. We say that $T \in C_{\cdot 0}$ if
\[
SOT-\lim_{m \raro \infty} T^{*m}= 0.
\]
The contraction $T$ is said to satisfy the $C_{00}$-property if both $T$ and $T^*$ are in $C_{\cdot 0}$. In this case, we simply write $T \in C_{00}$. In \cite[Theorem 3.2]{DDS} we proved that $\frac{1}{\|\br\|}\br \in C_{00}$ for all covariance $\sigma > 0$ and angle $\theta \in [0, 2\pi)$. A similar technique establishes in the vector-valued case:
\[
\frac{1}{\|\br^E\|} \br^E \in C_{00}.
\]
This result is also comparable to the asymptotic properties observed in \cite{Chavan et al}. Finally, we remark that, following the computation of \cite[Section 3]{DDS}, one concludes that $\br^E$ is not similar to a contraction.

\vspace{0.1in}

\noindent\textbf{Acknowledgement:}
We are deeply grateful to Professor Jan Stochel for responding to our questions regarding the boundary behavior of inner functions in the present context. The first named author is supported by the National Board for Higher Mathematics (NBHM), India (Order No.: 0204/10/(18)/2023/R\&D-II/2791 dated 28 February, 2023). The research of the second named author is supported by the National Board for Higher Mathematics (NBHM), India (Order No: 0204/16(8)/2024/R\&D- II/6760, dated May 09, 2024). The research of the third named author is supported in part by TARE (TAR/2022/000063) by SERB, Department of Science \& Technology (DST), Government of India.

\end{document}